\newcommand{\disp}{\displaystyle}
\newcommand{\nc}{\newcommand}
\nc{\G}{{\Gamma}} \nc{\BC}{{\mathbb C}} \nc{\BQ}{{\mathbb Q}}
\nc{\BR}{{\mathbb R}} \nc{\BZ}{{\mathbb Z}} \nc{\BP}{{\mathbb P}} \nc{\PC}{{\BP_1(\BC)}}
\nc{\BN}{{\mathbb N}} \nc{\BM}{{\mathbb M}}
\nc{\fH}{{\mathbb H}}
\nc{\mat}{{\binom{a\,\ b}{c\,\ d}}}
\nc{\U}{{\mathcal U}}
\nc{\PS}{{\mbox{PSL}_2(\BZ)}} \nc{\SL}{{\mbox{SL}_2(\BZ)}}
\nc{\SR}{{\mbox{SL}_2(\BR)}} \nc{\PR}{{\mbox{PSL}_2(\BR)}}
\nc{\GL}{{\mbox{GL}}} \nc{\PQ}{{\mbox{PGL}_2^+(\BQ)}}
\nc{\GR}{{\mbox{GL}_2^+(\BR)}} \nc{\PG}{{\mbox{PGL}_2(\BC)}}
\nc{\GC}{{\mbox{GL}_2(\BC)}}
\nc{\f}{{\mathcal{F}(\fH)}}
\nc{\Cc}{\widehat{\BC}}
\nc{\e}{{E_{\rho}(\G)}}
\nc{\g}{{\gamma}}
\nc{\vm}{{V_{\rho}(\G)}}
\nc{\oo}{{\mathcal O}}
\nc{\M}{{\mbox{M}}}
\nc{\om}{{\omega}}
\nc{\Om}{{\Omega}}
\nc{\TX}{{\widetilde{X}}}
\nc{\ol}{\overline}
\nc{\cl}{{\mathcal L}}
\nc{\ce}{{\mathcal E}}
\nc{\la}{{\lambda}}
\nc{\La}{{\Lambda}}
\nc{\cz}{{\mathcal Z}}
\newtheorem{numbered}{}[section]
\newtheorem{thm}[numbered]{Theorem}
\newtheorem{lem}[numbered]{Lemma}
\newtheorem{remark}[numbered]{Remark}
\newtheorem{prop}[numbered]{Proposition}
\newtheorem{cor}[numbered]{Corollary}
\numberwithin{equation}{section}
\newcommand{\thmref}[1]{Theorem~\ref{#1}}
\newcommand{\propref}[1]{Proposition~\ref{#1}}
\newcommand{\lemref}[1]{Lemma~\ref{#1}}
\begin{document}

\title[]{Automorphic Schwarzian equations and integrals of weight 2 forms}
\author[]{Abdellah Sebbar} \author[]{Hicham Saber}
\address{Department of Mathematics and Statistics, University of Ottawa, Ottawa Ontario K1N 6N5 Canada}
\address{Department of Mathematics, University of Ha'il, Kingdom of Saudi Arabia}
\email{asebbar@uottawa.ca}
\email{hicham.saber7@gmail.com}
\subjclass[2010]{11F03, 11F11, 34M05.}
\keywords{Schwarz derivative, Modular forms, Eisenstein series, Equivariant functions, representations of the modular group, Fuchsian differential equations}
\maketitle
\maketitle
\begin{abstract}
	In this paper, we investigate the non-modular solutions to  the Schwarz differential equation	 $\{f,\tau \}=sE_4(\tau)$ where $E_4(\tau)$ is the weight 4 Eisenstein series and $s$ is a complex parameter. In particular, we provide explicit solutions for each  $s=2\pi^2(n/6)^2$ with $n\equiv 1\mod 12$. These solutions are obtained as integrals of meromorphic weight 2 modular forms. As a consequence, we find explicit solutions to the differential equation $\ \disp y''+\frac{\pi^2n^2}{36}\,E_4\,y=0$ for each $n\equiv 1\mod 12$ generalizing the work of Hurwitz and Klein on the case $n=1$. Our investigation relies on the theory of equivariant functions on the complex upper half-plane. This paper supplements a previous work where we determine all the parameters $s$ for which the above Schwarzian equation has a modular solution.
\end{abstract}
\section{Introduction}

Let $\fH$ be the upper half of the complex plane, and define the Dedekind eta function by
 \[
\eta(\tau)\,=\,q^{1/24}\prod_{n\geq 1}\,(1-q^n)\,, \ q=\exp(2\pi i\tau),\ \tau\in\fH
\]
and  the weight 4 Eisenstein series by 
\[
E_4(\tau)\,=\,1+240\,\sum_{n=1}^{\infty}\,\sigma_3(n)\,q^n\,, \ \mbox{with } \disp \sigma_3(n)=\sum_{0<d\mid n}\,d^3.
\]

In 1889, Hurwitz \cite{hurwitz} wrote down a solution to the differential 
equation
\[
y''+\frac{\pi^2}{36}\,E_4\,y=0
\]
as $y=\eta^{-2}$. 
  This equation was considered by Klein ten years earlier in \cite{klein} and much later by Van der Pol in \cite{vdp}. One of the goals of this paper is to give explicit solutions to the equation
\[
y''+\frac{\pi^2n^2}{36}\,E_4\,y=0
\]
for infinitely many integer values of $n$, namely for all $n\equiv 1\mod 12$.
The main tool in our study is to try to solve the Schwarzian differential equation attached to this differential equation which we now explain.

The Schwarz derivative or the Schwarzian of a meromorphic function $f$ on a domain $\Omega$ is given by
\begin{equation}\label{schwarz}
\{f,z\}\,=\, \left(\frac{f''}{f'}\right)'-\frac12 \left(\frac{f''}{f'}\right)^2\,=\,
\frac{f'''}{f'}-\frac32\left(\frac{f''}{f'}\right)^2.
\end{equation}
It defines a projectively invariant differential operator that behaves as a quadratic differential
\[
	\{f,z\}dz^2\,=\,\{f,w\}dw^2+\{w,z\}dz^2. 
\]
Moreover, $\{f,z\}=0$ if and only if $f$ is a linear fraction and if $\{f,z\}=\{g,z\}$ for two meromorphic functions $f$ and $g$ on $\Omega$, then $f$ is a linear fraction of $g$.

 When $\Omega$ is the complex upper half-plane $\fH$, it is natural to ask when does $\{f,\tau\}$, $\tau\in\fH$, enjoy any automorphic properties. It turns out that if $f$ is an automorphic function (of weight 0) on a discrete subgroup $\G$ of $\SR$, then $\{f,\tau \}$ is a (meromorphic) automorphic form of weight 4 for $\G$ thanks to the quadratic differential property. 
 Conversely, if $F(\tau)=\{f,\tau \}$ is a weight 4 automorphic form, then $f$ is not necessarily an automorphic function, nevertheless, there exists a complex 2-dimensional representation $\rho$ of $\G$ such
 \[
 f(\gamma\cdot \tau)\,=\,\rho(\gamma)\cdot f(\tau)\,,\ \ \tau\in\fH\,,\ \gamma\in \G,
 \]
 where the action is by linear fractions on both sides of this relation.
We say that $f$ is $\rho-$equivariant. If $\rho=1$ is constant, then $f$ is an automorphic function, and if $\rho=Id$ is the standard representation, then $f$ commutes with the action of $\G$ and we say that $f$ is an equivariant function on $\fH$.  This class of functions has been studied extensively with many applications in \cite{brady, sb1, sb2, ell-zeta,ss3, ss4, ss1}.

In the paper \cite{s-s-part1} we focused on the case $\G=\SL$ and  we studied the meromorphic functions $h$ for which the Schwarz derivative is a holomorphic modular form of weight 4. In other words, we considered the Schwarzian equation
\begin{equation}\label{the-equ}
\{h,\tau\}\,=\,s\,E_4(\tau)\,,\ \ \tau\in\fH,
\end{equation}
where $s$ is a complex parameter. If a solution $h$ is given, any other solution is a linear fraction of $h$. The main result in the paper loc. cit. determines precisely the values of the parameter $s$ for which the solution to \eqref{the-equ} is a modular function for a finite index subgroup of $\SL$. In fact, there are infinitely many such values given by $s=2\pi^2(m/n)^2$ with $\gcd(m,n)=1$ and $2\leq m\leq 5$. Also, it turns out that the representation attached to such solutions are all irreducible.

In this paper, we are interested in other type of solutions to \eqref{the-equ} for which the representations are reducible. The solutions are not modular but they arise as integrals of weight 2 modular forms with a character of $\SL$. These modular forms must be non-vanishing and if they have poles in $\fH$, they must be double poles with zero residues. In the meantime, this occurs precisely when $s=2\pi^2(m/6)^2$ with $m\equiv 1\mod 12$. The case of a holomorphic weight 2 modular form is unique and is given by $\eta^4$ where $\eta$ is a the Dedekind eta-function and it corresponds to $m=1$. Starting with this solutions, we build other solutions by considering the meromorphic modular forms
\[
f_n=\frac{\eta^4}{\prod_{i=1}^n(J-a_i)^2}\,,
\]
where $n$ is a non-negative integer and $J$ is the classical elliptic  modular invariant. If $h_n$ is the integral of $f_n$, then the Schwarz derivative of $h_n$ is holomorphic on $\fH$ if it is meromorphic with at  most simple poles. Thus, the residues of $f_n$ must vanish which is equivalent to the fact that the parameters $a_i$, $1\leq i\leq n$, satisfy a system of algebraic equations. We prove that this system has always a solution and thus, for each $n\geq 0$, we construct an explicit solution to \eqref{the-equ} with $ \disp s=2\pi^2\,\frac{(12n+1)^2}{36}$. In terms of ordinary differential equations, our construction allows us to find explicit solutions to 
\[
y''\,+\,\frac{\pi^2(12n+1)^2}{36}\,E_4\,y\,=\,0\,, \ \ n\in\BN,
\]
of the form
\[
y\,=\, \eta^{-2}\prod_{i=1}^n (J-a_i) \,.
\]
This generalizes the solution given by   Hurwitz   in the case $n=0$.

\tableofcontents

\section{Schwarzian equations and automorphy}
The Schwarz derivative is closely related to second degree differential equations. Indeed, let $\Omega$ be a domain in $\BC$ and let $F(z)$ be a meromorphic function on $\Omega$. If $ y_1$ and $y_2$ are two linearly independent solutions to the ODE
\[
y''+\frac{1}{2}F(z)\,y\,=\,0\,,
\]
then $f=y_2/y_1$ is a solution to the Schwarz differential equation
\[
\{f,z\}\,=\,F(z).
\]
Most of the functional properties of the Schwarz derivative follow from  this connection.

 Now we suppose that $\Omega=\fH$, the complex upper half-plane, in which case we will  use the variable $\tau$ as a variable instead of $z$. Let $F$ is a nonzero weight 4 automorphic form for a discrete subgroup $\G$ of $\SR$ and suppose there exists
   a meromorphic function $f$ on $\fH$ such that $\{f,\tau\}=F(\tau)$. For $\gamma\in\G$ and $\tau\in\fH$, set $g(\tau)=f(\gamma \tau)$ (Here, whenever  $\disp \gamma=\mat$ is any matrix, and $z$ is a complex number, then $\disp \gamma z:=\frac{az+b}{cz+d}$). We have
   \[
   \{g,\tau\}=\{g,\gamma \tau\}\,\frac{d^2\gamma \tau}{d\tau^2}+\{\gamma,\tau\}.
   \]
   As $~\disp \frac{d^2\gamma 
   }{d\tau^2}=\frac{1}{(c\tau+d)^4}$ and $\{\gamma \tau,\tau\}=0$, we have
   \[
   \{g,\tau\}=\frac{1}{(c\tau+d)^4}\,F(\gamma \tau)=F(\tau).
   \]
   Therefore $\{g,\tau\}=\{f,\tau\}$ and consequently $g$ is a linear fraction of $f$. This defines a representation $\rho$ of $\G$ in $\PG$ such that
   \[
   f(\gamma \tau)=\rho(\gamma) f(\tau)\,,\ \ \tau\in\fH\,\ \ \gamma\in\G,
   \]
and we say that $f$ is $\rho-$equivariant. Conversely, if we are given a pair $(\Gamma,\rho)$ where $\Gamma$ is a discrete subgroup of $\SR$ and $\rho$ is a representation of $\G $ in $\PG$, then $\{f,\tau\}$ is a weight 4 automorphic form for $\G $. We note that for any Fuchsian group $\Gamma$ (first or second kind) and any 2-dimensional representation $\rho$ of $\Gamma$, $\rho-$equivariant functions always exist \cite{ss4}.

As for the analytic behavior, from the definition, $\{f,\tau\}$ will have a double pole at points of $\fH$ where $f'$ vanishes or has multiple poles, and will be holomorphic elsewhere including at simple poles.

We now focus on the case $\G =\SL$ and we look for the meromorphic functions $f$ on $\fH$ such that $\{f,\tau\}$ is a weight 4 modular form that is  holomorphic on $\fH$ and at $\infty$. In other words, we would like to solve the Schwarzian equation
\begin{equation}
\label{main}
\{f,z\}\,=\,s\,E_4(z),
\end{equation}
where $s$ is a complex parameter, and $E_4$ is the weight 4 Eisenstein series.
A solution $f$ to this equations would have a non-vanishing derivative, that is, $f$ is locally univalent where it is finite and has at most simple poles. Moreover, such solution always exists. Indeed, as $H$ is simply connected and $E_4$ is holomorphic on $\fH$, the Fuchsian differential equation
\begin{equation}\label{main2}
y''\,+\,\frac{s}{2}\,E_4\,y\,=\,0
\end{equation}
has two linearly independent holomorphic solutions on $\fH$, $y_1$ and $y_2$ and $f$ is given by their ratio. Conversely, if we are given a solution to \eqref{main}, then $f'$ is non-vanishing and has only double poles (if any), therefore $\sqrt{f'}$ defines a meromorphic functions on $\fH$ with at most simple poles. One can easily verify that $y_1=1/\sqrt{f'}$ and $y_2=f/\sqrt{f'}$ are two linearly independent solutions to \eqref{main2}.

\section{Modular and non-modular solutions}
In this section, we will analyze the modular properties of a solution to the main equation \eqref{main}. As $s=0$ yields linear fractions as solutions, we will always assume $s\neq 0$. Let $r$ be a complex number with a non-negative real part such that $s=2\pi^2 r^2$ and set $q=\exp(2\pi i \tau)$. In terms of $q$, the 
 differential equation \eqref{main2} becomes
 \begin{equation}\label{main3}
 	\frac{d^2y}{dq^2}+\frac{1}{q}\frac{dy}{dq}-\frac{r^4}{4}\frac{E_4(q)}{q^2}\,y\,=\,0,
 		\end{equation}
 	where $q$ is in the punctured disc $\{0<|q|<1\}$. It is clear that we are dealing with a Fuchsian differential equation with a regular singular point at $q=0$ as $E_4(q)=1+\mbox{O}(q)$. We now apply the Frobenius method to write down the shape of the $q-$expansion of the solutions. The indicial equation $x^2-r^2/4=0$ has solutions $x_1=r/2$ and $x_2=-r/2$ and  $\mbox{Re}\,(x_1)\geq \mbox{Re}\,(x_2)$ because we assumed Re$\,(r)\geq 0$. Therefore, the first solution to \eqref{main2} has the shape
 	\[
 	y_1(q)\,=\,q^{r/2}\,\sum_{n=0}^{\infty}\,c_nq^n\,,\ \ c_0\neq0.
 	\]
 To write down the shape of the second solution, one needs to look at the difference $x_1-x_2=r$. In particular, if $r$ is not an integer, then we have another solution of the form
 \[
  	y_2(q)\,=\,q^{-r/2}\,\sum_{n=0}^{\infty}\,c^*_nq^n\,,\ \ c^*_0\neq0.
 \]
 Notice that $y_1$ and $y_2$ are linearly independent so that in the case $r\notin\BZ$, a solution to \eqref{main} is given by
 \[
 h(\tau)\,=\,\frac{y_2(q)}{y_1(q)}\,=\,q^{-r}\,\sum_{n=0}^{\infty}\, a_nq^n\,,\ \ a_0\neq 0,
\]
which is meromorphic at $\infty$. However, if $r\in\BZ$ then a second solution is given by 
\[
k\log(q) y_1(q)+q^{-r}\,\sum_{n=0}^{\infty}\, C_nq^n\,,\ \ C_0\neq0\,,\ k\in\BC,
\]
which, together with $y_1(q)$, gives a solution to \eqref{main} with a logarithmic singularity at $\infty$  of the form 
\[
h(\tau)=k\log(q)+q^{-r}\,\sum_{n=0}^{\infty}\, b_nq^n\,,\ \ b_0\neq0.
\]
Here $\log$ can be any branch of the logarithm. Since the Schwarz derivative is invariant under linear fractional transformations, we conclude the following
\begin{prop}\cite{s-s-part1} The equation $\{h,\tau\}=s\,E_4(\tau)$ has a solution of the form
	\begin{equation}
	h(\tau)\,=\,q^{r}\,\sum_{n=0}^{\infty}\, a_nq^n\,,\ \ a_0\neq 0 \ \mbox{ if  }\ r\notin\BZ 
	\end{equation}
	and
		\begin{equation}
	h(\tau)\,=\,\tau + q^{-r}\,\sum_{n=0}^{\infty}\, b_nq^n\,,\ \ b_0\neq 0 \ \mbox{ if  }\ r\in\BZ .
	\end{equation}
	\end{prop}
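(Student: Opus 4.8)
The plan is to read off the two normal forms directly from the Frobenius analysis of \eqref{main2} carried out above and then to normalise by a linear fraction. Recall from that discussion that, in the variable $q=\exp(2\pi i\tau)$, equation \eqref{main2} has a regular singular point at $q=0$ with indicial roots $\pm r/2$, that $\mathrm{Re}(r)\geq 0$, and (since $s\neq 0$) that $r\neq 0$; recall also that every solution of $\{h,\tau\}=sE_4$ is a linear fraction of a ratio $y_2/y_1$ of two linearly independent solutions of \eqref{main2}, and that such a ratio is $\rho$-equivariant for a representation $\rho\colon\SL\to\PG$. So I would fix a convenient Frobenius pair in each of the two cases and compute the ratio.

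If $r\notin\BZ$ the indicial roots do not differ by an integer, so Frobenius' method gives linearly independent solutions $y_1=q^{r/2}\sum_{n\geq 0}c_nq^n$ and $y_2=q^{-r/2}\sum_{n\geq 0}c_n^{*}q^n$ with $c_0,c_0^{*}\neq 0$, and
\[
h=\frac{y_1}{y_2}=q^{\,r}\,\frac{\sum_{n\geq 0}c_nq^n}{\sum_{n\geq 0}c_n^{*}q^n}=q^{\,r}\sum_{n\geq 0}a_nq^n,\qquad a_0=\frac{c_0}{c_0^{*}}\neq 0,
\]
is the asserted solution, since the quotient of two convergent power series with nonzero constant term is again one.

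If $r\in\BZ$ then $r\geq 1$. The first Frobenius solution is still $y_1=q^{r/2}\sum_{n\geq 0}c_nq^n$ with $c_0\neq 0$; the second, the exponents now differing by the positive integer $r$, has the form $y_2=\kappa\,y_1\log q+q^{-r/2}\sum_{n\geq 0}C_nq^n$ with $C_0\neq 0$ and some $\kappa\in\BC$. Using $\log q=2\pi i\tau$,
\[
\frac{y_2}{y_1}=2\pi i\,\kappa\,\tau+q^{-r}\sum_{n\geq 0}b_nq^n,\qquad b_0=\frac{C_0}{c_0}\neq 0,
\]
and provided $\kappa\neq 0$ the linear fraction $z\mapsto z/(2\pi i\kappa)$ turns this into a solution of the required shape $h=\tau+q^{-r}\sum_{n\geq 0}b_n'q^n$ with $b_0'\neq 0$.

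Everything above is formal once $\kappa\neq 0$, and I expect the non-vanishing of $\kappa$ to be the one genuine obstacle. I would rule out $\kappa=0$ as follows. Were $\kappa=0$, both Frobenius solutions would be of pure power type, so — using $r\in\BZ$ — the ratio $h=y_2/y_1=q^{-r}\sum_{n\geq0}b_nq^n$ would be single valued and meromorphic in $q$ at $q=0$; in particular $h(\tau+1)=h(\tau)$, whence $\rho(T)=I$ in $\PG$ for $T=\bigl(\begin{smallmatrix}1&1\\0&1\end{smallmatrix}\bigr)$. But the normal closure of $T$ in $\SL$ is all of $\SL$ (the conjugate $STS^{-1}$ is the lower elementary matrix, and the two elementary matrices generate), so $\rho$ would be trivial; thus $h$ would be a modular function for $\SL$ that is meromorphic at the cusp, i.e. $h=R(j)$ for a non-constant rational function $R$. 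Since $j-1728=E_6^2/\Delta$ has a double zero at $\tau=i$, the function $h$ (or $1/h$, if $R$ has a pole at $j=1728$) has local multiplicity an even integer $m\geq 2$ there, and the standard local expansion of the Schwarz derivative gives $\{h,\tau\}=\dfrac{1-m^2}{2(\tau-i)^2}+\cdots$, a genuine double pole; this contradicts the holomorphy of $sE_4$ at $\tau=i$. Hence $\kappa\neq 0$, which completes the argument. (For the applications in this paper only the case $r\notin\BZ$ is actually used, since there $r=(12n+1)/6$.)
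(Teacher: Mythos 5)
Your proof is correct and follows essentially the same route as the paper: the Frobenius analysis of \eqref{main2} at the regular singular point $q=0$, then normalisation by a linear fraction using the projective invariance of the Schwarz derivative. The only place you go beyond the text is in ruling out $\kappa=0$ in the case $r\in\BZ$ (via triviality of $\rho$ and the resulting double pole of $\{h,\tau\}$ at $\tau=i$), a point the paper leaves implicit by writing $k\in\BC$ and citing \cite{s-s-part1}; that added argument is sound.
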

As a consequence,  a solution $h$ satisfies $h(\tau+m)=h(\tau)$, $\tau\in\fH$ for some integer $m$ if and only if $r\in\BQ\setminus\BZ$. Notice that this condition means that $T^m\in\ker\rho$ where $\disp T=\binom{1\ \ 0}{0\ \  1}$ and $\rho$ is the 2-dimensional representation attached to the equivariant function $h$.

In the paper \cite{s-s-part1} we investigated  which parameter $s$,  the subgroup $\ker \rho$ has  a finite index inside the modular group. In other words, for which $s$, the solution to \eqref{main} is a modular function (for a finite index subgroup of $\PS$). In fact, we have
\begin{thm} \cite{s-s-part1} The equation $\{f,\tau\}=s\,E_4(\tau)$ has a modular solution $h$  if and only if $s=2\pi^2 r^2$ where$r=m/n$ with $2\leq n\leq 5$ and $\gcd(m,n)=1$, in which case
	the invariance group of $h$ is $\Gamma(n)$.
	\end{thm}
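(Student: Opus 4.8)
The plan is to translate modularity into finiteness of a monodromy group and then appeal to the classification of finite subgroups of $\PG$. Write $s=2\pi^2 r^2$ with $\mathrm{Re}(r)\ge 0$, and let $\rho\colon\PS\to\PG$ be the projective representation attached to a solution $h$, so $h(\g\tau)=\rho(\g)h(\tau)$. Since $h$ is modular for a finite-index subgroup of $\PS$ precisely when $\ker\rho$ has finite index, the theorem amounts to determining when $M:=\rho(\PS)$ is finite. First I would realize $M$ as the projective monodromy group of the second-order Fuchsian equation induced by $y''+\tfrac{s}{2}E_4 y=0$ on the orbifold modular curve $X(1)$, whose only regular singular fibers lie over $J=0$, $J=1$, $J=\infty$. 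Because $y''+\tfrac{s}{2}E_4 y=0$ is nonsingular on all of $\fH$, the local monodromy at the order-$3$ elliptic point has order dividing $3$, and in fact exactly $3$: were it trivial, $h$ would be invariant under the order-$3$ element fixing that point, forcing a critical point of $h$ there, which is impossible since $\{h,\tau\}=sE_4$ is holomorphic. The same reasoning at $\tau=i$ makes $\rho(S)$ of order exactly $2$. The Frobenius analysis already carried out gives exponent difference $r$ at the cusp; combined with the earlier observation that $h$ is periodic only when $r\in\BQ\setminus\BZ$, I may assume $r=m/n$ in lowest terms with $n\ge2$, and then $\rho(T)$ has order exactly $n$. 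Hence $M$ is generated by elements of orders $3$, $2$, $n$ whose product is the identity; equivalently $M$ is a quotient of the triangle group $\Delta(2,3,n)$ preserving these orders.

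For sufficiency, if $2\le n\le5$ then $\tfrac12+\tfrac13+\tfrac1n>1$, so $\Delta(2,3,n)$ is one of the finite groups $S_3,A_4,S_4,A_5$; thus $M$ is finite and $h$ is modular. To identify the invariance group, note $\rho(T^n)=1$, so $\rho$ factors through $\PS/\langle\langle T^n\rangle\rangle\cong\Delta(2,3,n)$, which for $n=2,3,4,5$ equals $\PS/\Gamma(n)\cong\mathrm{PSL}_2(\BZ/n\BZ)$; the induced map $\Delta(2,3,n)\to\PG$ is injective because none of $S_3,A_4,S_4,A_5$ has a proper quotient still containing elements of all the orders $2,3,n$. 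Therefore $\ker\rho=\Gamma(n)$, and since $h=q_n^{\,m}(a_0+a_1q_n^{\,n}+\cdots)$ in the variable $q_n=e^{2\pi i\tau/n}$, $h$ is meromorphic at every cusp, hence is a modular function with invariance group exactly $\Gamma(n)$.

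For necessity, assume $h$ is modular, so $M$ is a finite subgroup of $\PG$ — hence cyclic, dihedral, or one of $A_4,S_4,A_5$. If $M$ is dihedral, its order-$3$ generator is a rotation and its order-$2$ generator must be a reflection (a central half-turn would make $M$ cyclic), so $M\cong S_3$ and the product of the two generators is a reflection, giving $n=2$. If $M\in\{A_4,S_4,A_5\}$ every element has order at most $5$, so $n\le5$. If $M$ is cyclic it is $C_6$ and the product of the order-$2$ and order-$3$ generators has order $6$, so $n=6$; I would rule this out as follows. When $M\cong C_6$ the (finite, abelian) linear monodromy is simultaneously diagonalizable, so the Fuchsian equation downstairs has a solution $\phi$ with finite-order scalar monodromy; such a $\phi$ is an algebraic function $J^{a}(J-1)^{b}R(J)$ with $R$ rational and $a\in\tfrac13\BZ$, $b\in\tfrac12\BZ$ (the fractional parts dictated by the exponents at $J=0,1$), so its exponent at $J=\infty$ lies in $\tfrac16\BZ$. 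But the exponents at the cusp are $\pm r/2=\pm m/12$ with $\gcd(m,12)=1$ (since $\gcd(m,6)=1$), and $m/12\notin\tfrac16\BZ$ — contradiction. Hence $n\ne6$ and $n\in\{2,3,4,5\}$.

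The main obstacle is the necessity half, specifically the elimination of $n=6$: finite group theory alone permits a cyclic monodromy of order $6$, and excluding it genuinely uses the arithmetic of the local exponents — that the cusp exponents have denominator dividing $2n=12$ rather than $n$, which clashes with the $\tfrac13\BZ+\tfrac12\BZ$ exponents forced at the two elliptic points. The other delicate point is establishing that those elliptic exponent differences are exactly $1/3$ and $1/2$ independently of $s$; this is cleanest in the orbifold charts of $X(1)$, where the nonsingularity of $y''+\tfrac{s}{2}E_4y=0$ on $\fH$ is manifest, rather than in the coarse coordinate $J$, which obscures the regular-singular structure.
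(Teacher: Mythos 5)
Your overall frame (orders of the local monodromies are exactly $2$ and $3$ at the elliptic points because a solution of the Schwarzian equation is locally univalent, order $n$ at the cusp from the Frobenius exponents, hence $\rho(\PS)$ is an order-preserving quotient of the $(2,3,n)$ triangle group) is sound, and the sufficiency half, including the identification $\ker\rho=\Gamma(n)$ via the normal closure of $T^n$ for $n\le 5$, works; this is a genuinely different route from the covering/ramification sketch the paper gives (normality, torsion-freeness and genus zero of $\ker\rho$, plus Riemann--Hurwitz for the two coverings $X(\ker\rho)\to\PC$ and $X(\ker\rho)\to X(\PS)$). The genuine gap is in the necessity half, exactly at the step you single out as the crux: the elimination of the cyclic case $n=6$.

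Your exponent bookkeeping there is inconsistent. There is no Fuchsian equation in the coordinate $J$ with exponents in $\tfrac13\BZ$ at $J=0$, in $\tfrac12\BZ$ at $J=1$, and equal to $\pm r/2$ at $J=\infty$: the Fuchs relation forces the six exponents to sum to $1$, whereas your configuration sums to $2a_0+2b_0+\tfrac56\in\tfrac13\BZ+\tfrac56$. In the honest single-valued-coefficient model (the projective normal form of the equation in $J$, whose solutions are $y\sqrt{J'}$) the exponents are $\{1/3,2/3\}$, $\{1/4,3/4\}$ and $\{\tfrac{-1\pm r}{2}\}$, and with $r=m/6$ the congruence $\tfrac{-1\pm r}{2}+a+b\in\BZ$ \emph{is} solvable, so the denominator clash evaporates. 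Indeed it must: for $r=1/6$ the function $J^{1/3}(J-1)^{1/4}$ (equivalently $y=\eta^{-2}$ upstairs) is precisely an algebraic eigen-solution with finite character monodromy, yet it belongs to the reducible, non-modular family constructed in this paper; what fails for denominator $6$ is not the existence of such a $\phi$ but the simultaneous diagonalizability (finiteness) of the whole monodromy, i.e.\ the non-splitting of the triangular extension measured by the periods $\omega_\gamma$ of $\eta^4$ --- a global datum that local exponents cannot detect. So $n=6$ has to be excluded by a global argument, for instance: if $\rho(\PS)\cong C_6$ then $\ker\rho$ is the commutator subgroup of $\PS$, a torsion-free subgroup of genus one, while the locally univalent $h$ descends to a covering $X(\ker\rho)\to\PC$ branched only over the image of the cusp, and Riemann--Hurwitz then forces genus zero, a contradiction; this is exactly the ``normal, torsion-free, genus zero'' mechanism the paper attributes to \cite{s-s-part1}. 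As written, your proof of necessity is incomplete.
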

A key idea in the proof is that if $h$ is a modular function then the associated representation $\rho$ must irreducible, and  $\ker\rho$ is a normal, genus zero and torsion free subgroup of $\PS$. If $X(\Gamma)$ denotes the modular curve  attached to a finite index subgroup $\G$, then we have two covering of compact Riemann surfaces $h: X(\ker\rho)\longrightarrow \PC$ and the natural covering $\pi: X(\ker\rho)\longrightarrow X(\PS)$. It turns out that each covering has the same ramification index at its ramified points; the first one because $h$ is $\rho-$equivariant and the second one because $\ker\rho$ is normal in $\PS$. The integer $n$ (resp. $m$) in the theorem is the common ramification index of the first (resp the second) covering.

Essentially, in this paper, we will focus on the case of the solutions $h$ to \eqref{main} such that the corresponding representation is reducible. According to \cite{s-s-part1}, these solutions are necessarily not modular. Yet, they will  arise from modular objects.

\section{Integrals of weight two modular forms}
In the previous section, we have encountered modular functions for subgroups of $\SL$ that were $\rho-$equivariant for some representation $\rho$.  In the meantime, the derivative of a modular function is a   weight two modular form. However,  not every weight 2 modular form is the derivative of a modular function. In this
 section, we will attempt to construct equivariant functions by integrating  weight 2 modular forms.
\begin{prop}
	Let $f$ be a weight 2 holomorphic modular form for a finite index subgroup $\G $ of $\SL$ and let
	\[
	h(\tau)\,=\,\int_i^{\tau}\,f(z)\,dz\,.
	\]
	Then $h$ is $\rho-$equivariant for $\G$ with $\rho$ being a triangular representation.
\end{prop}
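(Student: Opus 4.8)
The plan is to show that the holomorphic differential $\om=f(z)\,dz$ is genuinely $\G$-invariant on $\fH$, so that its primitive $h$ can change only by a period (an additive constant) under $\G$; these constants then assemble into an additive character of $\G$, which is precisely a unipotent — hence triangular — representation in $\PG$.

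First I would record that $h$ is well defined and holomorphic on $\fH$: since $\fH$ is simply connected and $f$ is holomorphic there, $\int_i^\tau f(z)\,dz$ is independent of the path from $i$ to $\tau$. Next, fix $\g=\mat\in\G$ and split $h(\g\tau)=\int_i^{\g i}f(z)\,dz+\int_{\g i}^{\g\tau}f(z)\,dz$. In the second integral substitute $z=\g w$, with $w$ running from $i$ to $\tau$ inside $\fH$. Since $\det\g=1$ we have $dz=(cw+d)^{-2}\,dw$, and because $f$ has weight $2$ for $\G$ we have $f(\g w)=(cw+d)^2 f(w)$; multiplying, $f(z)\,dz=f(w)\,dw$, i.e. $\g^{*}\om=\om$. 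Hence $\int_{\g i}^{\g\tau}f(z)\,dz=\int_i^\tau f(w)\,dw=h(\tau)$, and therefore
\[
h(\g\tau)\,=\,h(\tau)+c(\g),\qquad c(\g):=\int_i^{\g i}f(z)\,dz\,\in\,\BC .
\]

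It remains to identify the map $\g\mapsto c(\g)$ and recognize it as a representation. Applying the displayed relation twice gives $h(\g_1\g_2\tau)=h(\tau)+c(\g_1\g_2)$ on one side and $h\big(\g_1(\g_2\tau)\big)=h(\g_2\tau)+c(\g_1)=h(\tau)+c(\g_2)+c(\g_1)$ on the other, so $c:\G\to(\BC,+)$ is a group homomorphism. Setting
\[
\rho(\g)\,=\,\begin{pmatrix}1 & c(\g)\\[2pt] 0 & 1\end{pmatrix}
\]
then defines a representation of $\G$ into the subgroup of unipotent upper-triangular matrices of $\PG$, and since $\rho(\g)$ acts on $\Cc$ by the translation $w\mapsto w+c(\g)$, the identity above reads $h(\g\tau)=\rho(\g)\cdot h(\tau)$. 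Thus $h$ is $\rho$-equivariant for $\G$ with $\rho$ triangular, as claimed. (Changing the base point $i$ to another point of $\fH$ merely conjugates $\rho$ by a translation, so triangularity persists.)

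There is no serious obstacle in this argument: everything rests on the single change-of-variables identity $\g^{*}(f\,dz)=f\,dz$, the only points needing a word of justification being the path-independence of the integral and the verification that $c$ is additive. What is worth emphasising is that, because $\om$ is \emph{strictly} invariant and not merely covariant, the representation produced is automatically unipotent rather than a more general triangular one; obtaining reducible representations with a nontrivial diagonal part will require integrating weight~$2$ forms carrying a nontrivial character, which is the refinement pursued next.
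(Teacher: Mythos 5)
Your proposal is correct and follows essentially the same route as the paper: both exploit the $\Gamma$-invariance of the differential $f(z)\,dz$ (weight $2$ covariance times the factor $d(\gamma z)/dz$) to get $h(\gamma\tau)=h(\tau)+\omega_\gamma$ with $\omega_\gamma$ a period integral, and then package the additive cocycle $\gamma\mapsto\omega_\gamma$ into a unipotent (hence triangular) representation. Your constant $c(\gamma)=\int_i^{\gamma i}f(z)\,dz$ equals the paper's $\omega_\gamma=\int_{\gamma^{-1}i}^{i}f(z)\,dz$ after the same change of variables, so the two arguments coincide.
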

\begin{proof}
Let $f$ be a weight 2 modular form for $\G$ and 	let $\gamma\in\G $. As
the differential form $f(z)dz$ is invariant under $\G $, we have
\[
f(\gamma\cdot \tau)\,=\,\int_i^{\gamma\cdot \tau}\, f(z)\,dz\,=\,
\int_{\gamma^{-1}\cdot i}^{\tau}\, f(z)\,dz\,=\,f(\tau) +\omega_{\gamma},
\]
where
\[
\omega_\gamma\,=\,\int_{\gamma^{-1}\cdot i}^i\,f(z) dz,\ \ \gamma\in\Gamma.
\]
Therefore, $f$ is a $\rho-$equivariant function for $\Gamma$ where
\[
\rho(\gamma)\,=\, \binom{1\ \ \omega_\gamma}{0\ \ 1}.
\]
It is easy to see that 
$
\omega_{\alpha\beta}=\omega_{\alpha}+\omega_{\beta}.
$
	\end{proof}
 In particular, if we integrate a weight two modular form that is non-vanishing
 on $\fH$, then its integral is locally univalent and its Schwarz derivative is a weight 4 holomorphic form for $\G$. Moreover, if $m$ is the level of $\G$ so that $T^m\in\G $, then $T^m\in \ker\rho$ or, in other words, $\omega_{T^m}=0$. Indeed, if $q=\exp(2\pi i\tau/m)$ then $h$ has a $q-$expansion that converges uniformly on compact subsets of $\fH$. In particular to compute $\omega_{T^m}$, we can integrate term-wise between $i-m$ and $i$ which amounts to zero.

 To obtain examples of such forms, we focus on the principal congruence subgroups $\G(n)$,  $n>1$.  In fact, from
 \cite{gunning} we have 
 $\dim M_2(\Gamma(2))=2 $ and
 \[
 \dim M_2(\Gamma(n))=\frac{n+6}{24}n^2\prod_{p\mid n,\, p \,{\tiny\mbox{prime}}}\left(1-\frac{1}{p^2}\right) \mbox{ for } n\geq 3.
 \]
 To construct non-vanishing weight 2 forms, we will make use of products of the Dedekind eta function. It satisfies transformation rules under the action of the modular group and we will use a version that does  not involve the Dedekind sums \cite{cohen}:
 \begin{equation}\label{eta1}
 \eta(\tau+1)=e^{\pi i/12}\eta(\tau)\,,
 \end{equation}
 and if $\gamma=\binom{a\ \, b}{c\ \,d}\in\SL$ with $c\neq 0$, we have
 \begin{equation}\label{eta2}
 \eta\left(\frac{a\tau+b}{c\tau+d}\right)\,=\,v(\gamma)(c\tau+d)^{1/2}\eta(\tau),
 \end{equation}
 where 
 \[
 v(\gamma)=\left(\frac{d}{|c|}\right)\exp\left(\frac{\pi i}{12}((a+d-3)c-bd(c^2-1))\right) \quad \mbox{ if }c\mbox{ is odd},
 \]
 and if $c$ is even:
 \[
 v(\gamma)=\left(\frac{c}{|d|}\right)\exp\left(\frac{\pi i}{12}((a+d-3)c-bd(c^2-1)+3d-3)\right) \varepsilon(c,d) ,
 \]
 where $\left(\frac{c}{d}\right)$ is the Legendre symbol and
 $\varepsilon(c,d)=-1$ when $c< 0$ and $d<0$ and $\varepsilon(c,d)=0$ otherwise.. In other words, $\eta$ is a weight 1/2 modular form with the multiplier system $v$ extended  to $\gamma$ with $c=0$ following \eqref{eta1}. In fact, $v$ is a character of the metaplectic group but $v^4$ is actually a character of $\SL$.
 
One can construct  non-vanishing weight 2 modular forms for $\G (n)$ for infinitely many integers $n\geq1$ as follows
 \begin{prop}\label{etaprod} Let $n\geq1$ be an integer. We have:
 	\begin{enumerate}
 		\item 	If $2|n$, then $\displaystyle \frac{\eta(\tau/n)^8}{\eta(2\tau/n)^4}\in M_2(\Gamma(n))$.
 		\item If $3|n$, then $\displaystyle \frac{\eta(\tau/n)^6}{\eta(3\tau/n)^2}\in M_2(\Gamma(n))$.
 		\item If $n\geq 1$, then $\eta^2(n\tau)\eta^2(\tau/n)\in M_2(\Gamma(6n))$.
 	\end{enumerate}
 	
 \end{prop}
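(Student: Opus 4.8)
The plan is to treat all three parts by one two-step recipe — establish a transformation law, then check holomorphy at the cusps — using throughout that each candidate is an eta quotient $\prod_i \eta(\alpha_i\tau)^{r_i}$ whose exponents sum to $8-4$, $6-2$, and $2+2$, all equal to $4$. Since $\eta$ is holomorphic and nowhere vanishing on $\fH$, the same holds for each candidate, and each has ``weight $2$'' up to an a priori nontrivial multiplier; so what really must be shown is (a) triviality of that multiplier on the stated group, and (b) holomorphy at the cusps. For (a) I would fix $\gamma=\binom{a\ \ b}{c\ \ d}\in\G(n)$ (resp. $\G(6n)$), so $n\mid b$, $n\mid c$, $n\mid a-1$, $n\mid d-1$, and for each scaling $\alpha=\alpha_i$ that occurs put $\mu=\alpha\tau$; a short manipulation gives $\alpha(\gamma\tau)=\gamma_\alpha\,\mu$ with $\gamma_\alpha=\binom{a\ \ \ \alpha b}{\alpha^{-1}c\ \ \ d}$. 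The various divisibility hypotheses are there precisely to make $\gamma_\alpha$ integral: for $\alpha=2/n$ in part (1) one needs $2\mid n$ so that $\alpha^{-1}c=nc/2\in\BZ$, for $\alpha=3/n$ in part (2) one needs $3\mid n$ so that $nc/3\in\BZ$, and since $\det\gamma_\alpha=ad-bc=1$ one always has $\gamma_\alpha\in\SL$. Feeding this into \eqref{eta1}--\eqref{eta2}, every factor transforms as $\eta(\alpha\gamma\tau)=v(\gamma_\alpha)\,(c\tau+d)^{1/2}\,\eta(\alpha\tau)$; the key uniformity is that the automorphy factor is the \emph{same} $(c\tau+d)^{1/2}$ for every $\alpha$, because $\alpha^{-1}c\cdot\alpha\tau+d=c\tau+d$ (the case $c=0$, i.e. $\gamma$ a translation, being handled directly by \eqref{eta1}). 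Raising to the powers $r_i$ and multiplying, the automorphy factors collect to $(c\tau+d)^{(\sum_i r_i)/2}=(c\tau+d)^2$, which re-proves the weight, and the whole multiplier reduces to the finite product $\prod_i v(\gamma_{\alpha_i})^{r_i}$.

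The main obstacle is to show this product equals $1$ for every such $\gamma$. I would use that $v^4$ is a character of $\SL$, as recorded above, and, for parts (2) and (3) where the exponents are not all multiples of $4$, that $v^2$ too descends to a character of $\SL$ with values in the $12$-th roots of unity. Conjugation by $\binom{1\ \ 0}{0\ \ \alpha}$ multiplies the lower-left entry $c$ — already a multiple of $n$, resp. $6n$ — by $\alpha^{-1}$, so each $\gamma_{\alpha}$ lies deep inside $\Gamma_0$ of a small prime, and in part (3) inside $\Gamma(6)$; the extra factor $6$ in the level of (3) is present for this reason, not for integrality. Because the relevant character has order dividing $12$, the values $v(\gamma_{\alpha_i})^{r_i}$ are explicit roots of unity, and one checks — using \eqref{eta2} and the balance of the numerator and denominator exponents — that their product is $1$. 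I would make this airtight by reducing to a finite generating set of $\G(n)$ (resp. $\G(6n)$): the cusp-at-infinity generator $T^{n}$ (resp. $T^{6n}$) is handled immediately by \eqref{eta1} and the remaining generators by \eqref{eta2}, with the verification splitting into a few cases according to the residue of $n$ modulo the relevant small modulus. This is where all the real computation sits, and where the stated levels are actually used.

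For (b), since each function is already holomorphic and nowhere zero on $\fH$, only the finitely many cusps of the group remain. The $q$-order at $\infty$ of $\prod_i\eta(\alpha_i\tau)^{r_i}$ is $\frac{1}{24}\sum_i r_i\alpha_i$, which is $0$ in parts (1) and (2) (the $q$-expansion has nonzero constant term there) and $(n^2+1)/(12n)>0$ in part (3); at the other cusps I would invoke the standard valuation of eta products — orders expressed through greatest common divisors of the cusp denominator with the scalings $\alpha_i$, weighted by the cusp width — and check that the total order is again nonnegative. Combining the transformation law with holomorphy at every cusp gives membership in $M_2$ of the stated group. (Alternatively, for parts (1) and (2) one can first identify $\eta(\tau)^8/\eta(2\tau)^4$ and $\eta(\tau)^6/\eta(3\tau)^2$ as weight-$2$ forms on $\Gamma_0(2)$ and $\Gamma_0(3)$ carrying a character of small order, and then pull back along $\tau\mapsto\tau/n$; the conjugation computation above shows that the character dies on the image of $\G(n)$.)
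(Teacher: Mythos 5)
Your outline follows exactly the route the paper intends (its proof is literally ``straightforward verification using \eqref{eta1} and \eqref{eta2}''): conjugate by $\binom{\alpha\ \ 0}{0\ \ 1}$ to write $\eta(\alpha\gamma\tau)=v(\gamma_\alpha)(c\tau+d)^{1/2}\eta(\alpha\tau)$ with a common automorphy factor, reduce everything to the triviality of the finite product $\prod_i v(\gamma_{\alpha_i})^{r_i}$ on the stated group, and then check the cusps. The gap is that this multiplier computation --- which you yourself identify as ``where all the real computation sits'' --- is never actually performed, and the method you propose for performing it is not workable as described: $\Gamma(n)$ for general $n$ has no usable explicit finite generating set (it is free of large rank growing with $n$), so ``checking a few cases according to the residue of $n$'' on generators is not a proof. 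Moreover, since each $\gamma\mapsto\gamma_{\alpha_i}$ is a homomorphism, the total multiplier is a character of $\Gamma(n)$, but $\Gamma(n)$ has infinite abelianization for $n\geq 3$, so no abstract order argument (via $v^2$ or $v^4$ having order dividing $12$) can force it to be trivial; the only way through is to substitute a general $\gamma\in\Gamma(n)$ with $a\equiv d\equiv 1$, $b\equiv c\equiv 0 \pmod n$ into the explicit formulas \eqref{eta1}--\eqref{eta2} (the even exponents kill the Legendre symbols and sign factors) and verify a congruence modulo $24$ in the exponent. That congruence check is the entire content of the proposition and is missing from your argument.

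That this cannot simply be waved through is shown by part (3): your claim that the factor $6$ in the level makes the character die is false when $n$ is even. Already for the translation $T^{6n}\in\Gamma(6n)$, formula \eqref{eta1} gives $\eta^2\bigl(n(\tau+6n)\bigr)\,\eta^2\bigl((\tau+6n)/n\bigr)=e^{\pi i(n^2+1)}\,\eta^2(n\tau)\,\eta^2(\tau/n)$, which equals $-1$ times the original function for even $n$ (e.g.\ $n=2$: $\eta^2(2\tau)\eta^2(\tau/2)$ picks up $-1$ under $\tau\mapsto\tau+12$). So the product of multipliers is not $1$ there, and the step ``one checks that their product is $1$'' fails; with the paper's convention that $M_2(\G)$ means trivial character (it writes $M_2(\G,\chi)$ when a character is allowed), the statement itself needs $n$ odd or level $12n$ in part (3). (The paper's one-line proof does not flag this either, and it only uses part (3) with $n=1$ later, where the claim is correct.) Your weight count, the uniformity of the factor $(c\tau+d)^{1/2}$, and the cusp estimates for parts (1) and (2) are fine; what is needed to complete the proof is the explicit congruence verification for a general element of $\Gamma(n)$, carried out separately for each of the three eta quotients, together with the correction just noted in case (3).
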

 \begin{proof} Straightforward verification using \eqref{eta1} and \eqref{eta2}. 
 	\end{proof} 
 One can probably give explicit formulas in the case of odd $n$ depending on the primes dividing $n$. For instance if there is a prime divisor $p$ of $n$ such that $p\equiv 11\mod 12$, then $\eta^2(\tau/n)\eta^2(p\tau/n)\in M_2(\G(n))$ etc. Some of the eta-quotients in the above proposition were inspired by \cite{r-w} where similar forms were given for $\G_0(n)$.

 The integral of each of these functions is a $\rho-$equivariant function for $\Gamma(n)$ with $\rho$ upper triangular,  and has a holomorphic Schwarz derivative with leading coefficient $2\pi^2r^2$ but not necessarily a multiple of $E_4$. As we will see below, none of these will yield a $\rho-$equivariant function for $\SL$ except for $n=1$ in the case
 (3) in the above proposition.
 
  We now suppose we are given a 2-dimensional representation $\rho$ of a finite index subgroup $\G $ of  $\SL$ and we suppose that $\rho$ is reducible. There exists a constant matrix $\sigma\in\GC$ such $\rho_1=\sigma\rho\sigma^{-1}$ is (upper)  triangular. In the meantime,  the function $g=\sigma\cdot f$ is $\rho_1-$equivariant and has the same Schwarz derivative as $f$. In addition, the kernels of both representations are the same. Therefore, in the context of solving Schwarzian differential equations, and without loss of generality, we will assume that $\rho$ is (upper) triangular.

\begin{thm}\label{triang}
	Let $h$ be a meromorphic function on $\fH$ and let $\G $ be a finite index subgroup of $\SL$. Then $h$ is $\rho-$ equivariant for a triangular representation of $\G$ if and only if the derivative $h'$ is a meromorphic  weight $2$ modular form with a character of $\G$.
\end{thm}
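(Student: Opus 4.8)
The plan is to prove the two implications separately, exploiting the behaviour of the Schwarzian and the definition of $\rho$-equivariance under differentiation. Suppose first that $h$ is $\rho$-equivariant for an upper triangular representation $\rho(\gamma)=\binom{a_\gamma\ \ b_\gamma}{0\ \ d_\gamma}$ of $\G$. Then for $\gamma=\binom{a\ \ b}{c\ \ d}\in\G$ we have $h(\gamma\tau)=\dfrac{a_\gamma h(\tau)+b_\gamma}{d_\gamma}$. Differentiating both sides with respect to $\tau$ and using $\dfrac{d(\gamma\tau)}{d\tau}=(c\tau+d)^{-2}$ gives
\[
h'(\gamma\tau)\,(c\tau+d)^{-2}\,=\,\frac{a_\gamma}{d_\gamma}\,h'(\tau),
\]
so that $h'(\gamma\tau)=\dfrac{a_\gamma}{d_\gamma}(c\tau+d)^2 h'(\tau)$. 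Thus $h'$ transforms like a weight $2$ form with the multiplier $\chi(\gamma)=a_\gamma/d_\gamma$. The only thing to check is that $\chi$ is actually a character, i.e. multiplicative: this follows immediately from the fact that $\rho$ is a homomorphism, since the diagonal entries of a product of upper triangular matrices multiply. Meromorphy of $h'$ on $\fH$ is inherited from that of $h$, and the behaviour at the cusps is of the usual meromorphic type because $h$ is meromorphic at the cusps (being $\rho$-equivariant with $\rho$ triangular forces, as in the earlier $q$-expansion analysis, a meromorphic or at worst logarithmic-plus-meromorphic local form, whose derivative is meromorphic in $q$); so $h'\in M_2(\G,\chi)$ in the meromorphic sense, as claimed.

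For the converse, assume $h'$ is a meromorphic weight $2$ form with character $\chi$, so $h'(\gamma\tau)=\chi(\gamma)(c\tau+d)^2 h'(\tau)$ for all $\gamma\in\G$. Fix $\gamma$ and set $g(\tau)=h(\gamma\tau)$. Then $g'(\tau)=h'(\gamma\tau)(c\tau+d)^{-2}=\chi(\gamma)h'(\tau)$, hence $g(\tau)=\chi(\gamma)h(\tau)+c_\gamma$ for some constant $c_\gamma\in\BC$ depending only on $\gamma$. That is,
\[
h(\gamma\tau)\,=\,\chi(\gamma)\,h(\tau)\,+\,c_\gamma,
\]
which is exactly the statement that $h$ is $\rho$-equivariant for the representation $\rho(\gamma)=\binom{\chi(\gamma)\ \ c_\gamma}{0\ \ \ 1}$ acting by linear fractions. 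It remains to verify that this $\rho$ is a genuine representation of $\G$. Computing $h(\alpha\beta\tau)$ in two ways — as $\rho(\alpha\beta)h(\tau)$ and as $\rho(\alpha)\big(\rho(\beta)h(\tau)\big)$ — and using that $\chi$ is multiplicative, one gets the cocycle relation $c_{\alpha\beta}=\chi(\alpha)c_\beta+c_\alpha$, which is precisely the condition for $\gamma\mapsto\binom{\chi(\gamma)\ \ c_\gamma}{0\ \ \ 1}$ to be multiplicative. Hence $\rho$ is an upper triangular (hence triangular) representation and $h$ is $\rho$-equivariant.

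The one subtlety worth spelling out, and the closest thing to an obstacle, is the consistency/well-definedness of the constant $c_\gamma$: we need $g-\chi(\gamma)h$ to be genuinely constant on all of $\fH$, not merely locally constant, which is fine since $\fH$ is connected, but one should note that $h$ and $g$ are a priori only meromorphic, so the identity $g'=\chi(\gamma)h'$ holds as an identity of meromorphic functions and integrates to $g=\chi(\gamma)h+c_\gamma$ on the connected domain $\fH$ away from the (discrete) polar set, and then everywhere by continuity of the meromorphic extension. A second minor point is bookkeeping at the cusps: in the forward direction one should remark that a meromorphic $\rho$-equivariant $h$ with triangular $\rho$ has $h'$ meromorphic at the cusps, using the local shape of $h$ recorded in the Frobenius analysis of \S 3 (either $h=q^{r}\sum a_nq^n$, or $h=\tau+q^{-r}\sum b_nq^n$, or more generally a constant-times-$\tau$ plus a meromorphic $q$-series), each of which differentiates to something meromorphic in $q$; conversely a meromorphic weight $2$ form with character has an integral of exactly this shape. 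With these remarks in place, the equivalence is established.
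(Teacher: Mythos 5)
Your proof is correct and follows essentially the same route as the paper: differentiate the equivariance relation to get $h'\in M_2(\G,\chi)$ with $\chi=a_\gamma/d_\gamma$, and conversely recover the equivariance by anti-differentiating $h'(\gamma\tau)(c\tau+d)^{-2}=\chi(\gamma)h'(\tau)$. The only cosmetic difference is that the paper integrates explicitly from $i$ and writes the resulting triangular matrix $\rho(\gamma)$ in closed form (in terms of $\omega_\gamma$ and $h(i)$), whereas you leave the integration constant $c_\gamma$ implicit and check multiplicativity via the cocycle relation $c_{\alpha\beta}=\chi(\alpha)c_\beta+c_\alpha$, which amounts to the same thing.
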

\begin{proof}
	Suppose that $h$ is a $\rho-$equivariant function for $\G$ with $\rho$ triangular. 	If $\displaystyle \gamma=\mat\in\G $ write $\displaystyle 
	\rho(\gamma)=\binom{a(\gamma)\ \ b(\gamma)}{0\ \ \ \  d(\gamma)}$. Notice that $a(\gamma)$ and $b(\gamma)$ are both characters of $\G$ and so is $\chi(\gamma)=a(\gamma)/d(\gamma)$. Now differentiating the equivariance relation 
	\[\disp h(\gamma z)=\rho(\gamma)h(z)=\frac{a(\gamma)}{d(\gamma)}h(z)+\frac{b(\gamma)}{d(\gamma)}=\chi(\gamma)h(z)+\frac{b(\gamma)}{d(\gamma)}\]
	 yields
	\[
	\frac{1}{(cz+d)^2}\,h'(\gamma z) \,=\, \chi(\gamma)\,h'(z),
	\]
	that is, $h'\in M_2(\G,\chi)$ with the behavior of $h'$ contingent on that of $h$.
	
	Conversely, suppose that  $h'$ is  a weight 2 modular form for $\G$ with a character $\chi$. Write
	\[
	h(\tau)=\int_i^{\tau}\,h'(z)\,dz +h(i),
	\]
	so that for $\gamma\in\SL$, we have
	\begin{align*}
	h(\gamma \tau)&=\int_i^{\gamma \tau}\,h'(z)\,dz+h(i)\\
	&=\int_{\gamma^{-1}i}^{\tau}\,h'(\gamma z)\,d\gamma z +h(i)\\
	&=\chi(\gamma)\,\int_{\gamma^{-1}i}^{\tau}\,h'(z)\,dz+h(i)\\
	&=\chi(\gamma)h(\tau)+\chi(\gamma)\omega_{\gamma}+(1-\chi(\gamma))h(i),
	\end{align*}
where $\displaystyle \omega_{\gamma}=\int_{\gamma^{-1}i}^i\, h'(z)\,dz $.
If, for $\gamma\in\G$, we define
\begin{equation}\label{expr-rho}
\rho(\gamma)\,=\, \begin{pmatrix} 1 & \omega_{\gamma}+(\chi(\gamma^{-1})-1)h(i)\\
0 & \chi(\gamma^{-1}) \end{pmatrix}
\end{equation}
then $h$ is $\rho-$equivariant for $\G$.
	\end{proof}
This essentially characterizes the $\rho-$equivariant functions for reducible representations. For a general weight 2 meromorphic modular form $g$, its integral is well defined everywhere  if the residues at the poles are zero. Additionally,
in order for  the integral  to have a holomorphic Schwarz derivative on $\fH$, one needs to have $g$ non-vanishing and with at most poles of order 2 so that the integral will have at most simple poles. 

\begin{remark}{\em
The weight 2 modular form with a character has always a $q-$expansion in $q=\exp(2\pi i\tau/6)$ as we will see in the next section. If this expansion does not have a constant term, then its integral will  be meromorphic at $\infty$, while if it has a constant term, then its integral will have a logarithmic singularity at $\infty$ as was predicted by the Frobenius method.
}
\end{remark}
\section{Holomorphic solutions}
From \propref{etaprod}, we see that $\eta^4$ is a non-vanishing weight 2 holomorphic modular form for $\G(6)$. It is also a weight modular form for $\SL$ with a character $\chi$.
If we define $f_6$ by
\begin{equation}\label{f6}
f_6(\tau)=\int_{i}^{\tau}\eta(z)^4 dz,
\end{equation} 
then, according to \thmref{triang}, $f_6$ is $\rho-$ equivariant function for $\SL$ with $\rho$ being triangular, and as its derivative is non-vanishing, its Schwarz derivative is a weight 4 holomorphic form for $\SL$. One can determine the leading coefficients by an easy computation to deduce the following
\begin{prop}
	We have
	\begin{equation}\label{f6s}
	\{f_6,\tau\}\,=\,\frac{2\pi^2}{36}\,E_4(\tau).
	\end{equation}
\end{prop}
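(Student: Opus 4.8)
The plan is to combine the classical fact $\dim M_4(\SL)=1$ with a single Fourier‑coefficient computation. As observed just before the statement, $f_6'=\eta^4$ is holomorphic and non‑vanishing on $\fH$, so $f_6$ is locally univalent, and, $f_6$ being $\rho$‑equivariant for $\SL$ with $\rho$ triangular (\thmref{triang}), the argument of \S2 shows that $\{f_6,\tau\}$ is a weight $4$ modular form for $\SL$, holomorphic on $\fH$. It therefore remains only to check that $\{f_6,\tau\}$ is holomorphic at the cusp and to compute its constant Fourier coefficient; the identity $M_4(\SL)=\BC E_4$ together with $E_4=1+\mathrm{O}(q)$ will then conclude the proof.

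Both points follow from one computation. Writing $g:=\log f_6'=4\log\eta$ and recalling the normalization $\frac{d}{d\tau}\log\eta(\tau)=\tfrac{\pi i}{12}E_2(\tau)$, where $E_2$ is the quasimodular weight $2$ Eisenstein series with $E_2=1+\mathrm{O}(q)$, one gets $g'=f_6''/f_6'=\tfrac{\pi i}{3}E_2$, which is holomorphic at $\infty$ with value $\tfrac{\pi i}{3}$, and $g''=(f_6''/f_6')'$, whose $q$‑expansion has no constant term, so $g''$ is holomorphic at $\infty$ with value $0$. Hence, by \eqref{schwarz}, $\{f_6,\tau\}=g''-\tfrac12(g')^2$ is holomorphic at $\infty$ with constant Fourier coefficient
\[
0-\tfrac12\Bigl(\tfrac{\pi i}{3}\Bigr)^2=\tfrac{\pi^2}{18}=\tfrac{2\pi^2}{36}.
\]
Comparing with $\{f_6,\tau\}=\lambda E_4=\lambda+\mathrm{O}(q)$ forces $\lambda=2\pi^2/36$, which is \eqref{f6s}. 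One need not even recognise $E_2$: from $\eta^4=q^{1/6}(1+\mathrm{O}(q))$ one has $g=\tfrac{\pi i\tau}{3}+(\text{a convergent power series in }q\text{ vanishing at }0)$, so $g'\to\tfrac{\pi i}{3}$ and $g''\to 0$ as $\mathrm{Im}\,\tau\to\infty$, giving the same value.

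There is no genuine obstacle here: the argument is a short and standard computation, and the only place to exercise care is the normalization of $\frac{d}{d\tau}\log\eta$ and the attendant factors of $2\pi i$. A fully self‑contained variant, bypassing the appeal to $\dim M_4(\SL)=1$, is to finish the computation globally on $\fH$: Ramanujan's identity $\frac{d}{d\tau}E_2=\tfrac{\pi i}{6}(E_2^2-E_4)$ gives $g''=-\tfrac{\pi^2}{18}(E_2^2-E_4)$, whence $\{f_6,\tau\}=-\tfrac{\pi^2}{18}(E_2^2-E_4)+\tfrac{\pi^2}{18}E_2^2=\tfrac{\pi^2}{18}E_4$, the non‑modular term $E_2^2$ cancelling exactly.
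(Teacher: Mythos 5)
Your proof is correct and is essentially the paper's own argument: the paper likewise determines the constant Fourier coefficient of the holomorphic weight~4 form $\{f_6,\tau\}$, and then remarks that the identity follows directly from $E_2=\frac{12}{\pi i}\frac{\eta'}{\eta}$ together with Ramanujan's identity, exactly your self-contained variant. The only difference worth noting is normalization: your form of Ramanujan's identity, $E_2'=\frac{\pi i}{6}(E_2^2-E_4)$, is the correct one, whereas the paper's displayed version \eqref{ram1} omits the factor $\frac{1}{12}$.
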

 It is worth mentioning that \eqref{f6s} can be deduced more directly. 
	Recall the weight 2 Eisenstein series $E_2$  defined by
	\[
	E_2(\tau)\,=\,\frac{12}{\pi i}\frac{\eta'(\tau)}{\eta(\tau)}=1-24\sum_{n\geq 1}\,\sigma_1(n)q^n\,,\ \ q=\exp(2\pi i\tau),
	\]
	so that
	\[
	E_2(\tau)=\frac{3}{\pi i}\frac{f_6'(\tau)}{f_6''(\tau)}.
	\]
	In the meantime, $E_2$ 
	satisfies  the Ramanujan identity \cite{ram}
	\begin{equation}\label{ram1}
	\frac{1}{2\pi i}\frac{dE_2(\tau)}{d\tau}=E_2(\tau)^2-E_4(\tau).
	\end{equation}
Using the definition of the Schwarz derivative, the identity \eqref{f6s} follows.

Thus, the equivariant function $f_6$ is a solution to \eqref{main} with $\displaystyle s=2\pi^2/36$. The denominator 36 is not there by chance. It is rather closely associated to the reducible representation or to the weight 2 modular forms with a character of $\SL$. Indeed, such a character must be trivial on the commutator group $\G'$ of $\SL$. This commutator is a level 6 normal congruence subgroup of index 12, and $\chi$ is completely determined by the image a generator of the coset group. In particular $\chi^{12}=1$. Thus, if $f$ is a weight 2 modular form with a character of $\SL$, then $f$ has always a $q-$ expansion with $q=\exp(2\pi i /6)$. 

\begin{prop}
	Let $g$ be a non-vanishing  weight 2 holomorphic modular form with a  character $\chi$ for $\SL$, then $g=c\eta^4$ for some constant $c$.
\end{prop}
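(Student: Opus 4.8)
The plan is to prove that the ratio $\phi:=g/\eta^4$ is constant. Since $\eta$ is non-vanishing on $\fH$, the form $\eta^4\in M_2(\SL,v^4)$ is holomorphic and non-vanishing on $\fH$; hence $\phi$ is holomorphic and non-vanishing on $\fH$, has weight $0$, and transforms under $\SL$ with the character $\psi:=\chi\cdot v^{-4}$. Because $\psi$ is a one-dimensional character of $\SL$ and the abelianization of $\SL$ is finite, $\psi$ has finite order; fix $N$ with $\psi^{N}=1$. Then $\Phi:=\phi^{N}$ is genuinely $\SL$-invariant (weight $0$, trivial character), holomorphic and non-vanishing on $\fH$.

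Next I would check that $\Phi$ is meromorphic at the cusp, hence an honest modular function for $\SL$. As recalled just before the statement, $g$ has a $q$-expansion in $q=\exp(2\pi i\tau/6)$, and being a holomorphic modular form this expansion contains only non-negative powers of $q$; on the other hand $\eta^4=q\prod_{n\geq1}(1-q^{6n})^4$ is $q$ times a unit of $\BC[[q]]$. Thus $\phi$ has at worst a simple pole in $q$ at $\infty$, and so does $\Phi=\phi^{N}$. Since the field of modular functions for $\SL$ is $\BC(J)$, write $\Phi=R(J)$ with $R$ a rational function; because $\Phi$ is holomorphic on $\fH$ its only possible pole on the $J$-line is at $J=\infty$, so $R$ is a polynomial in $J$, and because $\Phi$ is non-vanishing on $\fH$ while $J:\fH\to\BC$ is surjective, $R(J)$ has no root in $\BC$. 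By the fundamental theorem of algebra $R$ is a nonzero constant, so $\phi^{N}$ is constant; as $\phi$ is continuous and non-vanishing on the connected set $\fH$, it is itself constant, i.e.\ $g=c\,\eta^{4}$.

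The argument is mostly routine, and the only delicate point is the reduction from a weight-$2$ form with a (possibly nontrivial) character to a genuine modular function: one uses the finiteness of the abelianization of $\SL$ to kill the character, and the precise order of vanishing of $\eta^{4}$ at the cusp to guarantee that $\phi$ acquires no pole inside $\fH$. Once this is done, the conclusion is forced by the two classical facts that the function field of the modular curve for $\SL$ is $\BC(J)$ and that a complex polynomial without zeros is constant. Alternatively, one can bypass the quotient trick by applying the valence formula to the trivial-character form $g^{12}\in M_{24}(\SL)$: non-vanishing of $g$ on $\fH$ forces all interior contributions to vanish, which pins down the order of $g^{12}$ at $\infty$, and comparison with $\eta^{48}=\Delta^{2}$ then again shows that $g^{12}/\eta^{48}$ is a bounded holomorphic modular function, hence constant.
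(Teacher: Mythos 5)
Your proof is correct, but your main argument runs along a genuinely different route from the paper's. The paper works directly with $g$: since $\chi^{12}=1$, the power $g^{12}$ is a non-vanishing holomorphic form of weight $24$ with trivial character, the valence formula then forces a double zero at $\infty$, and $g^{12}/\Delta^{2}=g^{12}/\eta^{48}$ is a bounded holomorphic modular function, hence constant by Liouville. You instead divide first, forming $\phi=g/\eta^{4}$ of weight $0$ with the finite-order character $\chi v^{-4}$, kill the character by passing to $\phi^{N}$, and then conclude from the structure of the function field: $\phi^{N}\in\BC(J)$, holomorphy on $\fH$ makes it a polynomial in $J$, and non-vanishing on $\fH$ plus surjectivity of $J$ makes that polynomial constant. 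What your route buys is that it avoids the valence formula entirely (replacing it with the classical facts that the modular function field is $\BC(J)$ and that a zero-free polynomial is constant), and you also spell out the final descent from $\phi^{N}$ constant to $\phi$ constant via connectedness of $\fH$, a step the paper leaves implicit; what the paper's route buys is brevity, needing only the valence formula and boundedness. Your closing alternative paragraph is essentially the paper's proof verbatim, so you have both arguments in hand. Two small remarks: the sentence claiming that $\Phi=\phi^{N}$ has at worst a \emph{simple} pole at $\infty$ is not quite right (the pole order can be as large as $N$), but this is harmless since all you use is meromorphy at the cusp; and when you invoke the $q$-expansion of $g$ in $q=\exp(2\pi i\tau/6)$ you are, as you say, relying on the discussion preceding the proposition (in particular that $\chi(-I)=1$ in weight $2$), which is legitimate.
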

\begin{proof}
	Since $\chi$ has order dividing 12, we see that $g^{12}$ is a non-vanishing holomorphic modular form of weight 24 and from the valence formula for modular forms, $g$ has a double zero at $\infty$. Therefore $g^{12}/\Delta^2=g^{12}/\eta^{48}$ is a  modular function
	that is holomorphic on $\fH$ and at $\infty$. It follows by Liouville theorem that $g^{12}/\eta^{48}$ is a constant. We deduce that $g=c\eta^4$ for some constant $c$.
	\end{proof}
\begin{cor}
	The only solution to \eqref{the-equ} arising as the integral of a holomorphic weight $2$ modular form for $\SL$ is $f_6$ (up to a constant factor).
\end{cor}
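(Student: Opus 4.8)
The plan is to read this off from the two preceding Propositions, so the argument is short. Suppose $h$ is a solution to \eqref{the-equ} that arises as an integral of a holomorphic weight $2$ modular form $g$ for $\SL$, say $h(\tau)=\int_i^\tau g(z)\,dz+h(i)$. First I would note that $g$ cannot be a modular form for $\SL$ with trivial character, since $\dim M_2(\SL)=0$; hence if $g\neq 0$ it transforms with some non-trivial character $\chi$ of $\SL$. (Equivalently, by \thmref{triang}, $h$ is $\rho$-equivariant for $\SL$ with $\rho$ triangular.)

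Next I would use the Schwarzian hypothesis to force $g$ to be non-vanishing on $\fH$. Indeed, a solution of $\{h,\tau\}=sE_4$ with $s\neq 0$ has holomorphic Schwarz derivative on $\fH$, hence is locally univalent wherever it is finite; since $h'=g$ is holomorphic, $g$ has no zeros on $\fH$. This is the one place where the Schwarzian equation (as opposed to an arbitrary weight $4$ form being the target) is genuinely used, and it is the only step requiring any care.

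Then I would invoke the Proposition asserting that a non-vanishing holomorphic weight $2$ modular form with a character for $\SL$ is $c\,\eta^4$ for some constant $c\neq 0$. Integrating $g=c\,\eta^4$ gives $h=c\,f_6+d$ for a constant $d$, with $f_6$ as defined in \eqref{f6}. Finally, since the Schwarz derivative is invariant under post-composition with affine maps (indeed with any linear fraction), we get $\{h,\tau\}=\{f_6,\tau\}=\frac{2\pi^2}{36}E_4(\tau)$, which is consistent with \eqref{f6s} and shows that $h$ coincides with $f_6$ up to the scaling $c$ (the additive constant $d$ being an artifact of the base point of integration). This establishes that $f_6$ is, up to a constant factor, the unique such solution.

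No substantial obstacle is anticipated; the statement is genuinely a corollary of the structure theorem \thmref{triang} together with the classification of non-vanishing holomorphic weight $2$ forms with character on $\SL$.
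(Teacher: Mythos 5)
Your proposal is correct and follows essentially the same route as the paper: the holomorphy of $\{h,\tau\}=sE_4$ on $\fH$ forces $h'=g$ to be non-vanishing, the preceding proposition then gives $g=c\,\eta^4$, and integrating from the base point $i$ yields $c\,f_6$ in accordance with \eqref{f6} and \eqref{f6s}. The paper treats this as an immediate consequence of that proposition, and your write-up just makes the implicit steps (via \thmref{triang} and the local univalence of Schwarzian solutions) explicit.
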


It follows that  none of the weight 2 modular forms listed  in \propref{etaprod} beside $\eta^4$ is a modular form with a multiplier system for $\SL$. While they behave well with the matrix $\displaystyle\binom{0\ \ -1}{1\ \  \ \ 0}$, they are in no way modular with respect to $\displaystyle\binom{1\ \ 1}{0\ \ 1}$.

We will see in the coming sections how to construct an infinite family of solutions to \eqref{main} arising as integrals of meromorphic weight 2 modular forms with a character.

The solution $f_6$ has some interesting historical perspective. From the above we see that $1/\sqrt{f_6'}=\eta^{-2}$ is a solution to the differential equation
\[
y''\,+\,\frac{\pi^2}{36}E_4 y\,=\,0,
\]
This equation has been of interest to Hurwitz in \cite[Equation 13]{hurwitz}, to Klein \cite{klein} and to Van der Pol \cite{vdp}. The following sections will generalize this solution by providing explicit solutions to differential equations of the form
\[
y''\,+\,\frac{\pi^2(12k+1)^2}{36}\,E_4\,y\,=\,0\; ,\ \ k\in\mathbb N,
\]
as well as solving the corresponding Schwarzian equations.

\section{A system of algebraic equations}
In this section we study a system of algebraic equations for which the existence of a solution will allow us to construct solutions to \eqref{main} for an infinite family of the parameters $s$. 

Let $a$, $b$ and $c$ be three positive real numbers and fix an integer $n\geq 1$. Consider the system in $n$ variables $x_1,\ldots,x_n$:
\begin{equation}\label{syst}
\frac{a}{1-x_i}\,-\,\frac{b}{x_i}\,=\,\sum_{j\neq i}\,\frac{c}{x_i-x_j}\,,\ \ 1\leq i\leq n,
\end{equation}
with the understanding that the right hand-side is zero when $n=1$. The set of solutions, if nonempty,  is acted upon by the symmetric group $S_n$. In the following, we will prove the existence of a solution $(x_1,x_2,\ldots,x_n)$.

For each $i=1,\ldots,n$, define the function
\[
f_i(x_1,\ldots,x_n)\,=\,\frac{a}{1-x_i}\,-\,\frac{b}{x_i}\,-\,\sum_{j\neq i}\,\frac{c}{x_i-x_j}.
\]
Let $H$ be the union of the hyperplanes $x_i=0$, $x_i=1$, $x_i=x_j$, $1\leq i,\,j\leq n$,  and define the function
\[
f:{\BR}^n\setminus H\longrightarrow {\BR}^n
\]
whose components are the $f_i$'s. Now let
\[
U=\{(x_1,\ldots,x_n)\in\BR^n:\, 0<x_1<x_2<\ldots<x_n<1\}.
\]
It is a connected component of the domain of $f$, and most importantly it is open and bounded so that its boundary $\partial U$ is compact.
\begin{lem}\label{lem61}
	Let $v$ a point of the boundary $\partial U$. For each sequence $(u_n)$ of $U$ converging to $v$, there exists $i\in\{1,\dots, n\}$ such that  $\disp \lim_{n\rightarrow\infty} |f_i(u_n)|=\infty$.
\end{lem}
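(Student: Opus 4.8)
The plan is to exploit the fact that each $f_i$ is a sum of simple rational terms, each of which blows up exactly on one of the hyperplanes defining $\partial U$. The key observation is that on $\overline U$ all the ``denominators'' that appear, namely $1-x_i$, $x_i$, and $x_i-x_j$ for $i<j$, are \emph{nonnegative}, since on $U$ we have $0<x_1<\cdots<x_n<1$. Consequently, for a fixed index $i$, the term $a/(1-x_i)$ is positive, the term $-b/x_i$ is negative, and in the sum $\sum_{j\neq i} c/(x_i-x_j)$ each summand with $j<i$ is positive while each with $j>i$ is negative. The idea is that near a boundary point these terms cannot all conspire to cancel, because one can always single out an $f_i$ in which the blowing-up term has a definite sign and is not cancelled by anything else.

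First I would take a point $v=(v_1,\dots,v_n)\in\partial U$, so $0\le v_1\le\cdots\le v_n\le 1$ with at least one of the defining inequalities becoming an equality. Let $(u^{(m)})_{m\ge 1}$ be a sequence in $U$ with $u^{(m)}\to v$; write $u^{(m)}=(u_1^{(m)},\dots,u_n^{(m)})$. The proof splits into cases according to which equality holds at $v$, and in each case I pick the extreme index.

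\textbf{Case 1: $v_1=0$.} Then $u_1^{(m)}\to 0^+$, so $-b/u_1^{(m)}\to-\infty$. In $f_1$ the remaining terms are $a/(1-u_1^{(m)})\to a$ (bounded) and $-\sum_{j>1} c/(u_1^{(m)}-u_j^{(m)})$; since $u_1^{(m)}<u_j^{(m)}$, each summand $c/(u_1^{(m)}-u_j^{(m)})$ is \emph{negative}, so $-\sum_{j>1} c/(u_1^{(m)}-u_j^{(m)})$ is a sum of \emph{positive} terms, hence $\ge 0$. Thus $f_1(u^{(m)})\le a/(1-u_1^{(m)}) - b/u_1^{(m)}\to -\infty$, giving $|f_1(u^{(m)})|\to\infty$.

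\textbf{Case 2: $v_n=1$.} Symmetrically, $u_n^{(m)}\to 1^-$, so $a/(1-u_n^{(m)})\to+\infty$; in $f_n$ the term $-b/u_n^{(m)}\to -b$ is bounded, and $-\sum_{j<n} c/(u_n^{(m)}-u_j^{(m)})$ is a sum of negative terms, hence $\le 0$. But $a/(1-u_n^{(m)})$ dominates: more carefully, all other terms in $f_n$ stay bounded above along the sequence except possibly $-\sum_{j<n}c/(u_n^{(m)}-u_j^{(m)})$, which is $\le 0$, so $f_n(u^{(m)})\le a/(1-u_n^{(m)})$? That is the wrong direction; instead note $f_n(u^{(m)}) \ge a/(1-u_n^{(m)}) - b/u_n^{(m)} - \sum_{j<n} c/(u_n^{(m)}-u_j^{(m)})$, and the subtracted sum is nonnegative and could itself blow up, so I must be more careful here; see the next paragraph.

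\textbf{Case 3: $v_k=v_{k+1}$ for some $1\le k\le n-1$ (with $v_k$ possibly $0$ or $1$, already handled).} Here the cleanest route is to choose, among the indices tied together at the common value, the \emph{smallest} index $i$ for which $\lim_m (u_i^{(m)}-u_{i'}^{(m)})=0$ for some $i'$, and split off from $f_i$ the group of terms with indices in the ``cluster.'' The obstacle, and the step I expect to be the crux, is exactly this: when several coordinates collapse together simultaneously at an interior value, the sum $\sum_{j\ne i} c/(x_i-x_j)$ has both positive and negative summands blowing up, and one must show they cannot cancel. The standard device is to sum the equations over the cluster: if $S=\{i_0,i_0+1,\dots,i_1\}$ is the set of indices collapsing to a common value $v^*\in(0,1)$, then $\sum_{i\in S} f_i(u^{(m)})$ has the intra-cluster terms $\sum_{i,j\in S,\,i\ne j} c/(u_i^{(m)}-u_j^{(m)})$ cancelling in pairs, leaving only bounded terms (the $a/(1-x_i)$, $-b/x_i$ with $x_i\to v^*\notin\{0,1\}$) plus the cross terms between $S$ and its complement, which are bounded since those differences stay bounded away from $0$. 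Hence $\sum_{i\in S} f_i(u^{(m)})$ is bounded. But within $S$, consider the smallest index $i_0$: the term $\sum_{j\in S,\,j>i_0} c/(u_{i_0}^{(m)}-u_j^{(m)})$ is a sum of strictly negative terms at least one of which $\to -\infty$ (since $u_{i_0}^{(m)}<u_j^{(m)}$ and some difference $\to 0$), while the only other terms in $f_{i_0}$ are bounded (as $v^*\in(0,1)$ and cross-cluster differences are bounded away from $0$); therefore $f_{i_0}(u^{(m)})\to +\infty$, so $|f_{i_0}(u^{(m)})|\to\infty$, and we may take $i=i_0$. The same argument, applied to the largest index in a cluster touching $0$ or $1$, handles Cases 1 and 2 uniformly, which resolves the gap noted above in Case 2. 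Thus in every case some $|f_i(u^{(m)})|\to\infty$, completing the proof.
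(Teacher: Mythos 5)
Your overall strategy --- a case analysis on the type of boundary point, choosing an extreme index so that every divergent term of the selected $f_i$ has one and the same sign --- is exactly the paper's, and your Case 3 (smallest index of a cluster collapsing to an interior value) is essentially the paper's argument; the cluster-summation device you invoke there is superfluous, since the direct estimate on $f_{i_0}$ is what does the work. However, your treatment of the faces $x_i=0$ and $x_i=1$ contains genuine errors. In Case 1 the inequality points the wrong way: since $-\sum_{j>1}c/(x_1-x_j)\ge 0$, dropping it yields $f_1\ge a/(1-x_1)-b/x_1$, not $\le$, so no conclusion follows; and the choice $i=1$ really does fail when several coordinates tend to $0$ simultaneously. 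For instance with $n=2$, take $x_1\to 0^+$ and $x_2=(1+c/b)\,x_1$: then $c/(x_2-x_1)=b/x_1$ cancels the singular term exactly and $f_1=a/(1-x_1)\to a$ stays bounded (it is $f_2$ that blows up). The paper avoids this by taking the \emph{largest} index with $v_i=0$, so that all unbounded terms of $f_i$ are negative.

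Your closing sentence proposes to repair Cases 1 and 2 by taking ``the largest index in a cluster touching $0$ or $1$''. That fixes the cluster at $0$, but it is the wrong endpoint for the cluster at $1$: with $i=n$ the positive blow-up $a/(1-x_n)$ competes against the negative intra-cluster terms $-c/(x_n-x_j)$, $j<n$, and these can cancel. For $n=2$ take $x_2\to 1^-$ and $x_1=x_2-\tfrac{c}{a}(1-x_2)$; then $c/(x_2-x_1)=a/(1-x_2)$ and $f_2=-b/x_2\to -b$ remains bounded, while $x_1\to 1$ as well, so the sequence does converge to a boundary point of this type. For the face $x_i=1$ one must take the \emph{smallest} index with $v_i=1$ (the paper's choice), so that the divergent terms $a/(1-x_i)$ and $-c/(x_i-x_j)$ with $j>i$ are all positive. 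The correct rule, which your write-up never quite states, is: largest index at the $0$-face, smallest index at the $1$-face, smallest index of an interior cluster; with that correction your argument coincides with the paper's proof.
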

\begin{proof}
	The boundary $\partial U$  consists of the hyperplanes $x_j=0$, $x_j=1$ and  $x_j=x_{j+1}$. Let $v=(v_1,\dots,v_n)$ be a point of $\partial U$. If $v$ has a zero coordinate,  let $i$ be the largest index such that  $v_i=0$. We have
	\begin{align*}
	f_i(x_1,\ldots,x_n)\,&=\,\frac{a}{1-x_i}\,-\,\frac{b}{x_i}\,-\,\sum_{j< i}\,\frac{c}{x_i-x_j}\,-\,\sum_{j> i}\,\frac{c}{x_i-x_j}\\
	&\leq \frac{a}{1-x_i}\,-\,\frac{b}{x_i}\,-\,\sum_{j> i}\,\frac{c}{x_i-x_j}.
	\end{align*}
	As
	\[
	\lim_{x\rightarrow v} \,\left(\frac{a}{1-x_i}-\sum_{j> i}\,\frac{c}{x_i-x_j}\right)\,=\,a+\sum_{j>i}\,\frac{c}{v_j}
	\]
which is finite, 	we see that $\disp \lim_{n\rightarrow\infty} f_i(u_n)=-\infty$.
	
	If one of the coordinates of $v$ is 1, let $i$ be the smallest index such that $v_i=1$, then 
	\begin{align*}
		f_i(x_1,\ldots,x_n)\,&=\,\frac{a}{1-x_i}\,-\,\frac{b}{x_i}\,-\,\sum_{j< i}\,\frac{c}{x_i-x_j}\,-\,\sum_{j> i}\,\frac{c}{x_i-x_j}\\
		&\geq \frac{a}{1-x_i}\,-\,\frac{b}{x_i}\,-\,\sum_{j< i}\,\frac{c}{x_i-x_j}.
	\end{align*}
		As
	\[
	\lim_{x\rightarrow v} \,\left(-\,\frac{b}{x_i}-\sum_{j< i}\,\frac{c}{x_i-x_j}\right)\,=\,-b-\sum_{j<i}\,\frac{c}{1-v_j}
	\]
	which is also finite,
	we have $\disp \lim_{n\rightarrow\infty} f_i(u_n)=\infty$.
	
If no coordinate of $v$ is 0 or 1, let $i$ be the smallest index such that $v_i=v_{i+1}$, then the first 3 terms of
	\[
	f_i(x_1,\ldots,x_n)\,=\,\frac{a}{1-x_i}\,-\,\frac{b}{x_i}\,-\,\sum_{j< i}\,\frac{c}{x_i-x_j}\,-\,\sum_{j> i}\,\frac{c}{x_i-x_j}\\
\]
have a finite limit while the last sum goes to $-\infty$. Thus $\disp \lim_{n\rightarrow\infty} f_i(u_n)=\infty$.
\end{proof}

\begin{thm}\label{syst1}
	The system \eqref{syst} has a solution in the cube $(0,1)^n$.
\end{thm}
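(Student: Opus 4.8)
The plan is to recognize the system \eqref{syst} as the system of critical-point equations of an explicit real-valued function on $U$, and then to produce a critical point by minimization. First I would introduce the potential $\Phi\colon U\to\BR$,
\[
\Phi(x_1,\ldots,x_n)\,=\,-a\sum_{i=1}^{n}\log(1-x_i)\;-\;b\sum_{i=1}^{n}\log x_i\;-\;c\sum_{1\leq i<j\leq n}\log(x_j-x_i),
\]
which is well defined and $C^{\infty}$ on $U$ because on $U$ every argument of a logarithm is positive. A one-line differentiation gives $\partial\Phi/\partial x_i=f_i$ for $1\le i\le n$, so the solutions of \eqref{syst} lying in $U$ are precisely the critical points of $\Phi$ in $U$.

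Next I would exploit that on $U$ one has $0<1-x_i<1$, $0<x_i<1$, and $0<x_j-x_i<1$ for $i<j$, so that \emph{every} summand of $\Phi$ is strictly positive; in particular $\Phi>0$ on $U$. Moreover, if $u_k\in U$ converges to a boundary point $v\in\partial U$, then along the sequence at least one of $x_1$, $1-x_n$, or some $x_{i+1}-x_i$ tends to $0$, hence the corresponding summand of $\Phi$ tends to $+\infty$; since all summands are nonnegative, this forces $\Phi(u_k)\to+\infty$. (This is the analytic content of \lemref{lem61}: $f$ admits a potential that blows up along $\partial U$.) Now fix $x_0\in U$ and consider the sublevel set $S=\{x\in U:\Phi(x)\le\Phi(x_0)\}$. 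It is closed in $U$, it is bounded because $U$ is bounded, and its closure in $\BR^n$ cannot meet $\partial U$ — a point of $\partial U$ in that closure would be a limit of points of $S$, along which $\Phi\to+\infty$, which is impossible. Hence $\overline S$ is a compact subset of $U$, and $\Phi$ attains its infimum over $\overline S$ at some $x^{\ast}\in\overline S\subset U$; since $\Phi>\Phi(x_0)$ off $S$, this $x^{\ast}$ is the infimum of $\Phi$ over all of $U$. As $U$ is open and $\Phi$ is differentiable, $\nabla\Phi(x^{\ast})=0$, i.e. $f_i(x^{\ast})=0$ for all $i$; thus $x^{\ast}$ solves \eqref{syst} and $x^{\ast}\in U\subset(0,1)^n$.

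The argument has no genuinely hard step; the one point that must be handled with care is precisely that the minimum is attained \emph{in the interior} of $U$, and this rests on the two features the setup has arranged: the boundedness of $U$, so minimizing sequences do not escape to infinity, and the divergence of $\Phi$ (equivalently \lemref{lem61}) along $\partial U$, so they cannot accumulate on $\partial U$. One could instead run a Brouwer-degree argument directly on $f$, using \lemref{lem61} to ensure $f\neq 0$ near $\partial U$, but the variational route is shorter and more transparent. Finally, as a by-product, each summand of $\Phi$ is convex on the convex set $U$ and the terms $-b\log x_i$ make $\Phi$ strictly convex, so the minimizer is unique; hence \eqref{syst} has exactly one solution in $U$ and, up to the $S_n$-symmetry noted above, exactly one in $(0,1)^n$.
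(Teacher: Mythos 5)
Your argument is correct, and it takes a genuinely different route from the paper's. You observe that the map $f=(f_1,\dots,f_n)$ is exactly the gradient of the explicit logarithmic (Stieltjes-type electrostatic) energy $\Phi$, which is positive on $U$ and blows up at every boundary point of the bounded cell $U$; an interior minimizer then is directly a zero of $f$. The paper instead minimizes an auxiliary function built from $f$ itself: it sets $F=\sum_i f_i^2$ and $G=(F+1)^{-1}$, uses Lemma \ref{lem61} to extend $G$ by $0$ to the compact boundary so that $G$ has an interior maximum, hence $F$ an interior minimum, and then must convert $\nabla F(u)=2D(u)f(u)=0$ into $f(u)=0$ by proving that the Jacobian $D$ is diagonally dominant, hence invertible. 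Your potential-based argument bypasses this Jacobian step entirely (and your boundary blow-up for $\Phi$ is proved directly, so Lemma \ref{lem61} is not even needed), at the small cost of having to exhibit the potential and check $\partial\Phi/\partial x_i=f_i$, which you do correctly. Moreover, since each summand of $\Phi$ is convex with affine arguments and the diagonal terms give a positive definite Hessian, $\Phi$ is strictly convex on the convex set $U$, so the critical point is unique in $U$ and unique in $(0,1)^n$ up to the $S_n$-action; this strengthens the paper's result, where uniqueness for general positive $a,b,c$ is only discussed heuristically in a remark (and asserted there only for $a=3$, $b=4$, $c=12$). Both proofs ultimately rest on the same two structural facts — boundedness of $U$ and blow-up at $\partial U$ — but yours is the classical electrostatic/variational argument and yields more.
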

\begin{proof}  The Jacobian of $f=(f_1,\dots, f_n)^t$ is given by
	\[
	D=\left(\frac{\partial f_i}{\partial x_j}\right)
	\]
	where, for $j\neq i$,
	\[
	\frac{\partial f_i}{\partial x_j}\,=\,\frac{c}{(x_i-x_j)^2}>0
	\]
	and
	\begin{align*}
	\frac{\partial f_i}{\partial x_i}\,&=\,\frac{a}{(1-x_i)^2}+\frac{b}{x_i^2}+\sum_{j\neq i}\,\frac{c}{(x_i-x_j)^2}\\
	&=\,\frac{a}{(1-x_i)^2}+\frac{b}{x_i^2}+\sum_{j\neq i}\,\frac{\partial f_i}{\partial x_j}.
	\end{align*}
	As the constants $a$, $b$ and $c$ are positive, we deduce that for $u\in\BR^n\setminus H$, the matrix $D(u)$ is diagonally dominant and therefore invertible. Now define 
	\[
	F=\sum_{i=1}^{n}\,f_i^2
	\]
	so that
	\[
	\begin{bmatrix}
	\frac{\partial F}{\partial x_1}\\.\\.\\.\\\frac{\partial F}{\partial x_n}
	\end{bmatrix}=2D f.
	\]
Let $\disp G(x)=(F(x)+1)^{-1}$. As $F(x)\geq 0$ and thanks to \lemref{lem61}, $G$ can be extended to a continuous map on the closure $\overline{U}$ which is compact, and having the value 0 at all the boundary points. Therefore $G$ has a global maximum at a point $u\in U$ which is also a local maximum. It follows that $u$ is a local minimum of $F$
 and thus $\disp \frac{\partial F}{\partial x_k}(u)=0$ for all $k$. In other words,  the gradient of $F$ vanishes, that is,   $2D(u)f(u)=0$. Since $D(u)$ is invertible, we have $f(u)=0$, so that $u$ is the desired solution. 		
	\end{proof}

\section{The general case.}
We have seen that $\eta^4$ is the only non-vanishing holomorphic weight 2 modular form with a character of $\SL$ (up to a scalar). The Schwarz derivative of its integral is a holomorphic weight 4 modular form for $\SL$. This can still happen if the weight 2 modular form is  non-vanishing and meromorphic as long as it has only double poles with zero residues. In this case, the integral is meromorphic with only simple poles to guarantee the holomorphicity of the Schwarz derivative. We start with $\eta^4$ to which we assign double poles using the elliptic modular $J-$function
\[
J(\tau)\,=\,\frac{E_4(\tau)}{1728\Delta(\tau)}\,,\ \ \Delta=\eta^{24}.
\]
Fix a positive integer $n$ and let $w_i$, $1\leq i\leq n$, be distinct points in $\fH$ that are not in the $\SL-$ orbits of $i$ or $\exp(2\pi i/3)$. Set
\[
f(\tau)\,=\,\frac{\eta^4(\tau)}{\prod_{i=1}^n\,(J(\tau)-J(w_i))^2}.
\]
The weight 2 form $f$ is non-vanishing on $\fH$ and for each $i$, $1\leq i\leq n$, $w_i$ is not a critical point of $J$ and hence  it is  a double pole of $f$. 	

For each $i$, $1\leq i\leq n$, set 
\[
\disp h_i(\tau)=f(\tau)(J(\tau)-J(w_i))^2 =\frac{\eta^4(\tau)}{\prod_{j\neq i}\,(J(\tau)-J(w_j))^2}.
\]
\begin{prop}\label{residue}
	For each $i$, $1\leq i\leq n$, the residue $\mbox{Res}(f,w_i)$ of $f$ at $w_i$ is given by
	\begin{equation}\label{resid}
\frac{h_i(w_i)}{6J'(w_i)}\left( \frac{3}{1-J(w_i)}-\frac{4}{J(w_i)}-\sum_{j\neq i}\frac{12}{J(w_i)-J(w_j)}\right).
\end{equation}
\end{prop}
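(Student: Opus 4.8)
Here is how I would approach the proof.

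The plan is to extract $\mbox{Res}(f,w_i)$ from a local Laurent expansion at $w_i$, using the value of $J$ itself as a local coordinate. Since $w_i$ is not a critical point of $J$ we have $J'(w_i)\neq 0$, so $J-J(w_i)$ vanishes to exactly first order at $w_i$; since $\eta$ is non-vanishing on $\fH$ and the factors $(J-J(w_j))^{-2}$ with $j\neq i$ are holomorphic and nonzero at $w_i$ (the $J(w_j)$ being pairwise distinct), the function $h_i$ is holomorphic and nonzero at $w_i$, so $f=h_i/(J-J(w_i))^2$ has a genuine double pole there. Writing $f\,d\tau=\frac{h_i}{J'}\cdot\frac{dJ}{(J-J(w_i))^2}$ and observing that $\phi:=h_i/J'$, viewed as a function of the local parameter $J$, is holomorphic near $J=J(w_i)$, the residue of $f\,d\tau$ (equivalently, the residue of $\phi(J)\,dJ/(J-J(w_i))^2$ in the coordinate $J$) is
\[
\mbox{Res}(f,w_i)\,=\,\left.\frac{d\phi}{dJ}\right|_{w_i}\,=\,\frac{h_i'(w_i)}{J'(w_i)^2}\,-\,\frac{h_i(w_i)\,J''(w_i)}{J'(w_i)^3},
\]
the second equality using $\frac{d}{dJ}=\frac1{J'}\frac{d}{d\tau}$.

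Next I would compute $h_i'(w_i)$ by logarithmic differentiation of $h_i=\eta^4\prod_{j\neq i}(J-J(w_j))^{-2}$. Using $\eta'/\eta=\frac{\pi i}{12}E_2$, hence $(\eta^4)'/\eta^4=\frac{\pi i}{3}E_2$, this gives
\[
\frac{h_i'(w_i)}{h_i(w_i)}\,=\,\frac{\pi i}{3}\,E_2(w_i)\,-\,2\sum_{j\neq i}\frac{J'(w_i)}{J(w_i)-J(w_j)}.
\]
Substituting into the residue formula and factoring out $h_i(w_i)/J'(w_i)$, the sum over $j\neq i$ yields exactly the term $-\sum_{j\neq i}\frac{12}{J(w_i)-J(w_j)}$ once everything is put over the denominator $6J'(w_i)$, and what remains to prove is that
\[
\frac{\pi i}{3}\,\frac{E_2}{J'}\,-\,\frac{J''}{(J')^2}\,=\,\frac16\left(\frac{3}{1-J}\,-\,\frac{4}{J}\right)
\]
holds at $\tau=w_i$. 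Since $w_i$ is arbitrary and since the left-hand side is actually a modular function on $\fH$ (the quasi-modular anomalies of $E_2/J'$ and $J''/(J')^2$ cancel under $\SL$), it suffices to establish the displayed identity on all of $\fH$.

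This identity is the only real obstacle, and I expect it to be the bulk of the work. Since $\frac{\pi i}{3}E_2=(\eta^4)'/\eta^4$ and $\frac{J''}{(J')^2}=\frac1{J'}\frac{d}{d\tau}\log J'$, its left-hand side equals $\frac{d}{dJ}\log(\eta^4/J')$, while its right-hand side equals $\frac{d}{dJ}\log\big((1-J)^{-1/2}J^{-2/3}\big)$; hence the identity is equivalent to
\[
J'\,=\,C\,\eta^4\,J^{2/3}(1-J)^{1/2}
\]
for some nonzero constant $C$. This in turn follows from the classical relations $1728\,\Delta\,J=E_4^3$, $1728\,\Delta\,(1-J)=-E_6^2$, $\Delta=\eta^{24}$ and $J'=-\frac{2\pi i}{1728}\,E_4^2E_6/\Delta$ — the last being the usual consequence of the Ramanujan identities for $E_4'$ and $\Delta'$ — since with these the right-hand side reduces to a constant multiple of $E_4^2E_6/\Delta$; the ambiguity in the fractional powers is harmless, both sides being globally meromorphic on $\fH$ with the same divisor. (Alternatively, one may substitute the Ramanujan expressions for $J'$ and $J''$ directly into the identity and check that the $E_2$-terms cancel, both sides collapsing to $-1152\,\Delta/E_4^3-864\,\Delta/E_6^2$.) Combining this identity with the two displays above produces \eqref{resid}.
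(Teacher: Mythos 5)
Your proof is correct and takes essentially the same route as the paper: the same residue formula $\mbox{Res}(f,w_i)=\frac{h_i'(w_i)}{J'(w_i)^2}-\frac{h_i(w_i)J''(w_i)}{J'(w_i)^3}$, followed by logarithmic differentiation of $h_i$ and the same key identity $\frac{4\eta'}{\eta}-\frac{J''}{J'}=\frac{J'}{6}\left(\frac{3}{1-J}-\frac{4}{J}\right)$ (your $\frac{\pi i}{3}E_2$ is exactly $4\eta'/\eta$). The only cosmetic difference is in how that identity is obtained: the paper logarithmically differentiates Rankin's formula $\Delta=\frac{-1}{(48\pi^2)^3}\frac{(J')^6}{J^4(J-1)^3}$, whereas you derive its sixth root $J'=C\,\eta^4J^{2/3}(1-J)^{1/2}$ (equivalently, the direct Ramanujan-identity check) from the standard $E_4$, $E_6$, $\Delta$ relations, which is the same classical input.
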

\begin{proof}
	 Using the  Taylor expansion  of  $\disp f=h_i/(J-J(w_i))^2$ around $w_i$, one can easily show that the residue of $f$ at $w_i$ is given by
	\begin{align*}
\mbox{Res}(f,w_i)&=\frac{h_i'(w_i)}{J'(w_i)^2}-\frac{h_i(w_i)J''(w_i)}{J'(w_i)^3}\\
&=\frac{h_i(w_i)}{J'(w_i)^2}\left(\frac{4\eta'(w_i)}{\eta(w_i)}-\frac{J''(w_i)}{J'(w_i)}-\sum_{j\neq i}\,\frac{2 J'(w_i)}{J(w_i)-J(w_j)}\right).
	\end{align*}
On the other hand, we have the classical formula \cite[Chapter 6]{rankin}
\[
\Delta=\frac{-1}{(48\pi^2)^3}\frac{(J')^6}{J^4(J-1)^3}.
\]
Taking the logarithmic derivative, we get
\[
\frac{24\eta'}{\eta}=\frac{\Delta'}{\Delta}=\frac{6J''}{J'}-\frac{4J'}{J}-\frac{3J'}{J-1}.
\]
Therefore
\[
\frac{4\eta'}{\eta}-\frac{J''}{J'}=\frac{J'}{6}\left(\frac{3J'}{1-J}-\frac{4J'}{J}\right)
\]
and the proposition follows.
	\end{proof}
\begin{thm}\label{thm7.2}
For each positive integer $n$, there exist $w_1,\ldots w_n$ in $\fH$ such that
\begin{equation}\label{f-fn}
f_n(\tau)\,=\,\frac{\eta^4(\tau)}{\prod_{i=1}^n\,(J(\tau)-J(w_i))^2}
\end{equation}
is a non-vanishing weight $2$ modular form with a character and having a double pole  and  zero residue at each $w_i$  and holomorphic elsewhere in $\fH$ and at $\infty$.
\end{thm}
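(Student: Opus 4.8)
The plan is to reduce the statement to the algebraic system \eqref{syst} and then invoke \thmref{syst1}. First I would record the features of $f_n$ that hold for \emph{any} admissible choice of the points $w_i$. Since $J$ is a weight $0$ modular function for $\SL$, each factor $J-J(w_i)$ is $\SL$-invariant, so $f_n=\eta^4/\prod_i(J-J(w_i))^2$ is a weight $2$ modular form with the same character (namely $v^4$) as $\eta^4$. The numerator $\eta^4$ is non-vanishing on $\fH$, hence $f_n$ is non-vanishing there, and its only poles in $\fH$ lie over the finitely many values $J(w_1),\dots,J(w_n)$. Since the critical values of $J$ on $\fH$ are $0$ and $1$, attained exactly on the $\SL$-orbits of $\exp(2\pi i/3)$ and $i$, taking the $w_i$ off those two orbits forces $J'(w_i)\neq 0$; then $(J-J(w_i))^2$ has a double zero at $w_i$, and, provided the $J(w_i)$ are pairwise distinct, $\eta^4(w_i)$ and $\prod_{j\neq i}(J(w_i)-J(w_j))^2$ are finite and nonzero, so $w_i$ is exactly a double pole of $f_n$. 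At the cusp, $\eta^4=q^{1/6}(1+O(q))$ while each $J-J(w_i)$ has a simple pole, so the $q$-expansion of $f_n$ begins with a nonzero multiple of $q^{(12n+1)/6}$; in particular $f_n$ vanishes at $\infty$ and is holomorphic there. Thus everything comes down to making all the residues vanish.

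Next I would feed this into \propref{residue}, applied to $f_n$ (which is the function ``$f$'' of that proposition once the $w_i$ are fixed). Writing $x_i:=J(w_i)$, the prefactor $h_i(w_i)/(6J'(w_i))$ in formula \eqref{resid} equals $\eta^4(w_i)/(6J'(w_i)\prod_{j\neq i}(x_i-x_j)^2)$, which under the admissibility conditions above is a nonzero complex number. Hence $\mbox{Res}(f_n,w_i)=0$ for all $i$ if and only if
\[
\frac{3}{1-x_i}-\frac{4}{x_i}=\sum_{j\neq i}\frac{12}{x_i-x_j}\,,\qquad 1\leq i\leq n,
\]
which is exactly the system \eqref{syst} with $a=3$, $b=4$, $c=12$.

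Since $a,b,c>0$, \thmref{syst1} produces a solution $(x_1,\dots,x_n)$ in the open cube $(0,1)^n$; by the construction in its proof this solution lies in the chamber $U$, so the $x_i$ are pairwise distinct and none of them equals $0$ or $1$. As $J\colon\fH\to\BC$ is surjective, for each $i$ I would choose $w_i\in\fH$ (say in the standard fundamental domain) with $J(w_i)=x_i$. Because $x_i\notin\{0,1\}$, the point $w_i$ is not $\SL$-equivalent to $i$ or $\exp(2\pi i/3)$, and because the $x_i$ are distinct the $w_i$ lie in distinct $\SL$-orbits. This choice satisfies all the admissibility conditions of the first paragraph, so by the second paragraph every residue $\mbox{Res}(f_n,w_i)$ vanishes; since $f_n$ is modular with a nowhere-zero character, the residue then also vanishes at each point $\SL$-equivalent to some $w_i$. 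Combined with the first paragraph, this is precisely the assertion.

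The only genuinely substantial ingredient is the existence result \thmref{syst1}; everything else is the bookkeeping above, so I do not expect a real obstacle beyond it. The two spots that still need care are: checking that $\eta^4(w_i)\neq 0$ and $J'(w_i)\neq 0$, so that killing a residue is genuinely \emph{equivalent} to the scalar equation $\frac{3}{1-x_i}-\frac{4}{x_i}-\sum_{j\neq i}\frac{12}{x_i-x_j}=0$ rather than automatic or impossible; and making sure the $w_i$ are pairwise $\SL$-inequivalent, so that $f_n$ really has $n$ honest double poles and agrees with the function written in \eqref{f-fn}.
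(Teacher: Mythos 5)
Your proposal is correct and follows essentially the same route as the paper: invoke Theorem~\ref{syst1} with $(a,b,c)=(3,4,12)$, choose $w_i$ with $J(w_i)=x_i$ (the paper takes them on the unit-circle arc between $i$ and $e^{2\pi i/3}$, which is your admissibility check in concrete form), and conclude via Proposition~\ref{residue} together with the $q$-expansion computation at $\infty$. Your write-up simply makes explicit the nonvanishing of the prefactor $h_i(w_i)/(6J'(w_i))$ and the pairwise inequivalence of the $w_i$, which the paper leaves implicit.
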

\begin{proof}
	This is a consequence of \thmref{syst1} with $a=3$, $b=4$ and $c=12$. Let $(x_1,\ldots x_n)$ be a solution to the system \eqref{syst}. For each $i$, there exists $w_i\in\fH$ such that
	$J(w_i)=x_i$. In fact, as $0<x_i<1$, $w_i$ can be taken in the arc of the unit circle between $i$ and $\exp(2\pi i/3)$. Using \propref{residue}, we see that $f_n$ satisfies the conditions of the theorem. It is easy to see that $f_n$ has a holomorphic $q-$expansion at $\infty$ with $q=\exp(2\pi i\tau/6)$ with leading term $\disp q^{1+12n}$.
	\end{proof}
As a consequence, the function
\begin{equation}
h_n(\tau)\,=\,\int_i^{\tau}\,f_n(z)\,dz
\end{equation}
is  $\SL-$equivariant with a triangular representation with at most simple poles and non-vanishing derivative. Therefore, its Schwarz derivative is a weight 4 modular form for $\SL$  that is holomorphic on $\fH$. Moreover, as the leading term of its $q-$expansion is $\disp \alpha q^{1+12n}$ with some constant $\alpha$, it is easy  to see that $\{h_n,\tau\}$ is holomorphic at $\infty$  with leading term $\disp 2\pi^2\frac{(12n+1)^2}{36}$. Including the holomorphic case with $n=0$, we conclude the following
\begin{thm}
For each integer  $n\geq 0$,	the function $h_n$  is a solution to
  \[
 \{h,\tau\}\,=\,sE_4(\tau) \quad \mbox{with }\ \  s=2\pi^2(12n+1)^2/36.
 \]
\end{thm}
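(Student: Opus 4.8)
The plan is to assemble the final theorem directly from the ingredients already built up in the previous section. The statement is really a corollary: for each $n\geq 1$ we have produced in \thmref{thm7.2} a non-vanishing weight $2$ meromorphic modular form $f_n$ with a character of $\SL$, whose only poles in $\fH$ are double poles with vanishing residue, and which is holomorphic at $\infty$ with $q$-expansion beginning $q^{1+12n}$ (where $q=\exp(2\pi i\tau/6)$); the case $n=0$ is $f_0=\eta^4$, treated in \secref{6} — I mean the \emph{Holomorphic solutions} section — where $h_0=f_6$ and $\{h_0,\tau\}=2\pi^2 E_4/36$. So the work is to check that $h_n=\int_i^\tau f_n(z)\,dz$ is well defined, that its Schwarz derivative is $2\pi^2(12n+1)^2/36\cdot E_4$, and to invoke the rigidity of the Schwarzian.

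First I would note that since every residue of $f_n$ vanishes (\thmref{thm7.2} via \propref{residue} and \thmref{syst1}), the integral $h_n(\tau)=\int_i^\tau f_n(z)\,dz$ is a well-defined single-valued meromorphic function on $\fH$, with at most simple poles at the $w_i$ (a double pole of $f_n$ with zero residue integrates to a simple pole) and holomorphic elsewhere. By \thmref{triang}, $h_n$ is $\rho$-equivariant for $\SL$ with $\rho$ triangular, where the character is the one carried by $f_n$. Because $h_n'=f_n$ is non-vanishing on $\fH$, the function $h_n$ is locally univalent wherever finite, so $\{h_n,\tau\}$ is holomorphic on all of $\fH$ (double poles of $h_n'$ would be needed to produce poles of the Schwarzian, and there are none). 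The equivariance plus the quadratic-differential transformation law for $\{\cdot,\tau\}$ under $\SL$, exactly as in \secref{2} — the \emph{Schwarzian equations and automorphy} section — forces $\{h_n,\tau\}$ to be a weight $4$ meromorphic modular form for $\SL$, hence holomorphic on $\fH$.

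Next I would pin down the behavior at $\infty$. Since $f_n$ has $q$-expansion $\alpha q^{1+12n}+\cdots$ with $q=\exp(2\pi i\tau/6)$ and $\alpha\neq 0$, term-by-term integration in $\tau$ gives $h_n(\tau)=\beta q^{1+12n}+\cdots$ with $\beta=\alpha\cdot\frac{6}{2\pi i(1+12n)}\neq 0$ and no constant or logarithmic term (the $q$-expansion has no constant term, so no $\log q=2\pi i\tau/6$ appears). Setting $r=(12n+1)/6$, a short direct computation with $\{q^r(1+O(q)),\tau\}$ — using $\{q^r,\tau\}=2\pi^2 r^2(1+O(q))$ in the variable $\tau$ — shows $\{h_n,\tau\}$ is holomorphic at $\infty$ with leading coefficient $2\pi^2 r^2=2\pi^2(12n+1)^2/36$. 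Therefore $\{h_n,\tau\}$ is a holomorphic weight $4$ modular form for $\SL$; since $M_4(\SL)$ is one-dimensional, spanned by $E_4$ (which has constant term $1$), and the constant term of $\{h_n,\tau\}$ is $2\pi^2(12n+1)^2/36$, we conclude $\{h_n,\tau\}=2\pi^2\frac{(12n+1)^2}{36}E_4(\tau)$, i.e. $h_n$ solves \eqref{main} with $s=2\pi^2(12n+1)^2/36$. The case $n=0$ is \eqref{f6s}.

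I do not expect a genuine obstacle here: the theorem is the payoff of \thmref{thm7.2}, and the only thing requiring care is the bookkeeping at the cusp — making sure no logarithmic term sneaks in (it does not, precisely because the relevant $q$-expansion starts at $q^{1+12n}$ rather than at $q^0$, which is the whole point of choosing the $w_i$ via the algebraic system) and that the leading Schwarzian coefficient is computed with the correct normalization of $q$. The slightly delicate conceptual point, already handled upstream, is that the existence of the $w_i$ with all residues vanishing is exactly the content of \thmref{syst1} with $(a,b,c)=(3,4,12)$; here one simply cites it.
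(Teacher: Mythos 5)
Your proposal is correct and takes essentially the same route as the paper: well-definedness of $h_n$ from the vanishing residues, equivariance with a triangular representation via \thmref{triang}, holomorphy of $\{h_n,\tau\}$ on $\fH$ and at $\infty$, and the leading coefficient $2\pi^2(12n+1)^2/36$ of the $q$-expansion identifying the form as that multiple of $E_4$ (the paper leaves the one-dimensionality of the space of weight $4$ forms implicit, which you spell out). One small correction to your parenthetical: $h_n'=f_n$ \emph{does} have double poles at the $w_i$; the Schwarzian is nevertheless holomorphic there because, as you correctly state earlier, the zero residues make these points simple poles of $h_n$, and the Schwarz derivative is regular at simple poles.
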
\qed

Let $w_i$, $1\leq i\leq n$, be as in \thmref{thm7.2}. We have
\begin{cor}
	For each integer $n\geq 0$, the function $\disp \eta^{-2}\prod_{i=1}^n(J(\tau)-J(w_i))$ is a solution to the differential equation
	\[
	y''\,+\,\frac{\pi^2(12n+1)^2}{36}E_4\,y\,=\,0.
	\]
\end{cor}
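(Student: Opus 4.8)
The plan is to deduce this corollary directly from the preceding theorem via the standard dictionary between Schwarzian equations and second-order linear ODEs that was set up in \S2. Recall that if $\{h,\tau\}=sE_4$ with $s=2\pi^2(12n+1)^2/36$, then $h'$ is non-vanishing with at most double poles, so $\sqrt{h'}$ is a well-defined meromorphic function on $\fH$ with at most simple poles, and $y_1=1/\sqrt{h'}$, $y_2=h/\sqrt{h'}$ are two linearly independent solutions of $y''+\frac{s}{2}E_4\,y=0$, i.e. of $y''+\frac{\pi^2(12n+1)^2}{36}E_4\,y=0$. So it suffices to take $h=h_n$ and to identify $1/\sqrt{h_n'}$ explicitly.

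The key computation is therefore just $h_n'=f_n=\eta^4/\prod_{i=1}^n (J-J(w_i))^2$, whence
\[
\frac{1}{\sqrt{h_n'}}\,=\,\frac{\prod_{i=1}^n (J(\tau)-J(w_i))}{\eta^2(\tau)}\,=\,\eta^{-2}\prod_{i=1}^n (J(\tau)-J(w_i)).
\]
One should note that the square root is meromorphic and single-valued on all of $\fH$: since $f_n$ is non-vanishing with only double poles (by \thmref{thm7.2}), $f_n$ has no zeros at all and its only singularities are the $w_i$, each of even order, so $\sqrt{f_n}$ has no branch points; concretely $\sqrt{f_n}=\eta^2/\prod_i (J-J(w_i))$ with a fixed branch of $\eta^2$, which is itself holomorphic and non-vanishing on $\fH$. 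Plugging $y_1=1/\sqrt{h_n'}$ into the ODE and using the earlier verification that $1/\sqrt{f'}$ solves $y''+\frac{s}{2}E_4 y=0$ whenever $\{f,\tau\}=sE_4$ finishes the argument; the case $n=0$ is exactly the Hurwitz solution $\eta^{-2}$.

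I anticipate no real obstacle here — this is a routine corollary. The only point requiring a line of care is the single-valuedness and meromorphy of $\eta^{-2}\prod(J-J(w_i))$, which follows because $\eta$ is holomorphic and non-vanishing on $\fH$ (so $\eta^{-2}$ is entire and non-vanishing there) and the finite product $\prod_{i=1}^n(J-J(w_i))$ is holomorphic on $\fH$; hence the proposed $y$ is holomorphic on $\fH$, as it must be for a solution of the Fuchsian equation with holomorphic coefficient $E_4$. If one wished to avoid invoking the $y_1=1/\sqrt{f'}$ lemma, an alternative is a direct check: set $y=\eta^{-2}P$ with $P=\prod(J-J(w_i))$, compute $y''/y$ using $(\eta^{-2})''/\eta^{-2}$ (which involves $E_2'$ and $E_2^2$, simplified by the Ramanujan identity \eqref{ram1} exactly as in the $n=0$ case) together with $P''/P$ and the cross term $-4(\eta'/\eta)(P'/P)$, and then use the system \eqref{syst} satisfied by the $x_i=J(w_i)$ (with $a=3$, $b=4$, $c=12$) to see that all the $E_2$-dependent and pole-dependent terms cancel, leaving precisely $-\frac{\pi^2(12n+1)^2}{36}E_4$. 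This second route is longer but self-contained; I would present the short route in the text and perhaps remark on the second.
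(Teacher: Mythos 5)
Your proposal is correct and follows essentially the same route the paper intends (the corollary is stated there without further proof precisely because it is the \S2 dictionary $y_1=1/\sqrt{h'}$ applied to $h_n$, with $h_n'=f_n$ giving $1/\sqrt{h_n'}=\eta^{-2}\prod_{i=1}^n(J-J(w_i))$, exactly as in the Hurwitz case $n=0$). Your extra remarks on single-valuedness of the square root and the optional direct verification via the Ramanujan identity and the system \eqref{syst} are sound but not needed beyond what the paper already set up.
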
\qed

Thus, we have been able to generalize the case $n=0$ due to Hurwitz and Klein to an arbitrary positive integer $n$.
\begin{remark}
	{\em The functions $f_n$ and $h_n$ are invariant under any action of the permutation group $S_n$ on the $w_i$ as well as if we change $w_i$ to $\gamma w_i$, $\gamma\in\SL$. However, we claim that $h_n$ and thus $f_n$ are unique up to a factor, and therefore the values $J(w_i)$ are unique up to a permutation. Heuristically, suppose that we have two solutions
		$\disp h_1=\eta^4/\prod_{i=1}^n(J-a_i)^2$ and 	$\disp h_2=\eta^4/\prod_{i=1}^n(J-b_i)^2$. Since $h_1$ and $h_2$ have the same Schwarz derivative, $h_1$ is a linear fraction of $h_2$. Meanwhile, both of their  $q-$expansions start with $\disp q^{1+12n}$, we deduce that $h_1=h_2$ and consequently one can deduce that the $a_i$'s are simply a permutation of the $b_i$'s. This suggests that the solution to \eqref{syst} might be unique, which is true at least for the case $a=3$, $b=4$ and $c=12$.
}
	\end{remark} 

{\bf Examples:}
	As we have seen, the case $n=0$ corresponds to the solution $f_n=\eta^4$. 
	
	For $n=1$, the system \eqref{syst} for $(a,b,c)=(3,4,12)$ is reduced to
	\[
	\frac{3}{1-x}-\frac{4}{x}=0
	\]
	which gives $x=7/4$ and thus
	\[
	f_1=\frac{\eta^4}{(J-7/4)^2}.
	\]
	For $n=2,\,3$ and $4$  and by a process of elimination, one can easily show that the solutions to the corresponding system \eqref{syst} are respectively given by the roots of the following polynomials:
	\begin{align*}
	n=2:\quad &  247x^2-260x+4 \\
	n=3:\quad & 31x^3-48x^2+\frac{96}{5}x-\frac{128}{95}\\
	n=4: \quad&   1233x^4-25234x^3+16368x^2-3520x+\frac{704}{5}.
\end{align*}
The Galois groups of these polynomials are given by the permutation group $S_n$ for the corresponding $n$. We conjecture that for each $n$, the solutions to \eqref{syst} are the roots of a degree $n$ irreducible  polynomial over $\BQ$ whose Galois group   is $S_n$.

{\bf Acknowledgment.} We thank David Handelman and Ahmed Sebbar for helpful discussions. 


\end{document}